 \newtheorem{thm}{Theorem}[section]
 \newtheorem{lem}[thm]{Lemma}
 \newtheorem{prop}[thm]{Proposition}
 \theoremstyle{definition}
 \newtheorem{defn}[thm]{Definition}
 \theoremstyle{remark}
 \newtheorem{rem}[thm]{Remark}
 \newtheorem{ex}[thm]{Example}
 \numberwithin{equation}{section}
\def\ca{{\mathcal A}}
\def\cb{{\mathcal B}}
\def\ch{{\mathcal H}}
\def\ck{{\mathcal K}}
\def\car{{\mathcal R}}
\def\cs{{\mathcal S}}
\def\cv{{\mathcal V}}
\def\g{\gamma}  \def\G{\Gamma}
\def\eeps{\epsilon}
\def\eps{\varepsilon}
\def\m{\mu}
\def\r{\rho}
\def\s{\sigma} 
\def\f{\varphi}  
\def\th{\theta}  
\def\om{\omega}
\def\ga{{\mathfrak A}} 
\def\gb{{\mathfrak B}}
\def\bc{\ensuremath{\mathbb C}}
\def\bz{\ensuremath{\mathbb Z}}
\def\id{\hbox{id}}
\def\ker{\hbox{Ker}}
\def\min{\mathop{\rm min}}
\def\aut{\mathop{\rm Aut}}
\def\ran{\mathop{\rm Ran}}
\def\ad{\mathop{\rm ad}}
\def\idd{{1}\!\!{\rm I}}
\begin{document}


%
%
%
%
%
%
%
%
%

\title[]{$C^*$-independence for $\bz_2$-graded $C^*$-algebras}

\author[M. E. Griseta]{Maria Elena Griseta}

\address{Dipartimento di Matematica\\
Universit\`{a} degli Studi di Bari\\
Via E. Orabona, 4, 70125 Bari, Italy}

\email{mariaelena.griseta@uniba.it}

\author[P. Zurlo]{Paola Zurlo}

\address{Dipartimento di Matematica\\
Universit\`{a} degli Studi di Bari\\
Via E. Orabona, 4, 70125 Bari, Italy}

\email{paola.zurlo@uniba.it}

\subjclass{46L06, 46L30, 46L53, 17A70.}

\keywords{ $\bz_2$-graded $C^*$-algebras, $C^*$-independence, $W^*$-independence, nuclearity.}

\date{April 6, 2025}

\begin{abstract}
We analyze a notion of $C^*$-independence for $\bz_2$-graded $C^*$-algebras. We provide other notions of statistical independence for $\bz_2$-graded von Neumann algebras and prove some relationships between them. We provide a characterization for the graded nuclearity property.
\end{abstract}

\maketitle

\section{Introduction}
\label{sec:intro}
The notion of independence in the noncommutative setting has been widely studied in recent decades due to its connection with noncommutative central limit theorems, Brownian motions and the law of small numbers. Some examples in this direction are given by free independence \cite{VoDyNi, Sp98}, Boolean independence \cite{SpW} and monotonic independence \cite{Mur01}.\\
\noindent Another notion of noncommutative independence, the $C^*$-independence or statistical independence, was first introduced in \cite{HaKa}, for its relevance in quantum field theory.  
Two $C^*$-subalgebras $\ga_1$ and $\ga_2$ of a given $C^*$-algebra $\ga$ are said statistical independent when any two marginal states on $\ga_1$ and $\ga_2$ respectively, admit a common extension on $\ga$. \\
In addition, the Schlieder condition, sometimes called $(S)$-independence, \emph{i.e.} $xy\neq0$ for given non-vanishing elements $x\in\ga_1$ and $y\in \ga_2$ of $C^*$-algebras $\ga_1$ and $\ga_2$, is a necessary condition for statistical independence. This result, due to Schlieder \cite{Sch} for the algebra of observables associated with a region of the Minkowski space, has been proved in \cite[Theorem 2.5]{GoLuWi} and \cite[Proposition 2.3.]{Ham97} for the general case. In addition, Roos \cite{Roos} showed that the $(S)$-independence is also a sufficient condition for $C^*$-independence if the subalgebras $\ga_1$ and $\ga_2$ commute elementwise. \\
\noindent Other notions of independence are given for von Neumann algebras in \cite{GoLuWi,Ham97}. More in detail, $C^*$-independence has a counterpart in von Neumann algebras by the so-called $W^*$-independence, which requires the natural condition of normality for the marginal states and their common extension.  It is also known that $W^*$-independence is stronger than logical independence first introduced by R\'{e}dei \cite{Red}, and this latter implies $C^*$-independence, \cite[Theorem 3.8]{GoLuWi}. 
Moreover, in \cite{Ham97} the author extended the result obtained by Roos to Jordan-Banach algebras. \\
This paper aims to analyze a notion of $C^*$-independence for $\bz_2$-graded $C^*$-algebras and its relationship with $W^*$-independence for $\bz_2$-graded von Neumann algebras. An example of $\bz_2$-graded $C^*$-algebra comes from the Canonical Anticommutation Relations algebra (CAR for short) (see \cite{CDF,CRZ2}), which provides applications to quantum field theory, statistical mechanics and quantum probability.\\
In the $\bz_2$-graded setting, we express the notion of $C^*$-independence and $W^*$-independence by means of the existence of a simultaneous even extension of two given even states on $C^*$-subalgebras or even and normal states for von Neumann subalgebras. $\bz_2$-graded $C^*$-algebras, also called superalgebras by physicists \cite{Lang}, are obtained by assigning an involutive $*$-automorphism on a $C^*$-algebra. In \cite{CRZ}, the authors proved that the spatial norm is the smallest between all compatible norms and that even states, \emph{i.e.} grading invariant states,  separate the elements of the $C^*$-algebras. Moreover, the product state on the involutive ($\bz_2$-graded) tensor product of two $\bz_2$-graded $C^*$-algebras is well defined if one of the two states is even \cite{CDF}. \\
In this paper, after giving a construction of the $\bz_2$-graded universal representation in Section \ref{sec:Z2graded}, we prove that $(S)$-independence is a necessary condition for $C^*$-independence in the $\bz_2$-graded setting. Moreover, we show that $(S)$-independence implies the aforementioned statistical independence if the algebras commute with the grading, \emph{i.e.} two odd elements anticommute (see Section \ref{sec:C*independence}).  As for $W^*$-independence for $\bz_2$-graded von Neumann algebras, we prove it is stronger than $C^*$-independence by passing, as in the case of trivial grading, through strict locality, cross property and logical independence (see Section \ref{sec:W*independence}). \\
In passing, we also take this opportunity to conclude the paper analyzing, in Section \ref{sec:nuclearity}, the nuclearity for $\bz_2$-graded $C^*$-algebras. Recall that it is possible to define several $C^*$-norms on the algebraic tensor product of two given $C^*$-algebras. Among those minimal and maximal play a privilege role \cite{Ka2}. A $\bz_2$-graded $C^*$-algebra $\ga$ is said to be $\bz_2$-nuclear if
the maximal and minimal $C^*$-cross norms on the $\bz_2$-graded tensor $\ga\hat{\otimes}\gb$ product are the same for every $\bz_2$-graded $C^*$-algebra $\gb$. Here, when the normal norm is compatible, in a sense that will be clarified in Section \ref{sec:nuclearity}, we characterize $\bz_2$-nuclearity by exploiting the normal tensor product of von Neumann algebras.\\
 
\section{$\bz_2$-graded $*$-algebras}
\label{sec:Z2graded}
In this section we start by recalling definitions and notions concerning $\bz_2$-graded $C^*$-algebras and Klein transformation. Successively, we provide the construction for $\bz_2$-graded enveloping von Neumann algebra and we recall some notions about the $\bz_2$-graded tensor products of $\bz_2$-graded $C^*$-algebras and their norms. We refer the reader to \cite{CRZ} and \cite{CDF} for further details. \\
Here and subsequently, all the considered structures will be taken as unitary. \\
Let $\bz_2=\{-1,1\}$ with the product as the group operation. A $*$-algebra $\ga$ is called an \emph{involutive $\bz_2$-graded algebra} if $\ga = \ga_1 \oplus \ga_{-1}$
and
\begin{align*}
  (\ga_i)^* &=(\ga^*)_i\,, & \ga_i\ga_j &\subset\ga_{ij}\,, & i,j &=1,-1\,.
\end{align*}
The subspaces $\ga_i$, $i=1,-1$ are called the \emph{homogeneous components} of $\ga$, and correspondingly any element of $\ga_i$ is called a homogeneous element of $\ga$. For any homogeneous element $x\in\ga_{\pm 1}$ we denote its {\it grade} by $\partial(x)=\pm1$. Assigning a $\bz_2$-grading on $\ga$ is equivalent to equipping $\ga$ with an involutive $*$-automorphism $\th$ (\emph{i.e.} $\th^2=\id_\ga$). Indeed, from one hand for a given $\bz_2$-graded $*$-algebra $\ga$ one takes $\th\lceil_{\ga_1} =\id_{\ga_1}$  and $\th\lceil_{\ga_{-1}} =-\id_{\ga_{-1}}$. 
On the other hand, if $\th\in\aut(\ga)$ is such that $\th^2=\id_\ga$, after taking $\eps_1 :=\frac{1}{2}(\id_{\ga}+\th)$ and $\eps_{-1} :=\frac{1}{2}(\id_{\ga}-\th)$ and denoting $\ga_1 :=\eps_1(\ga)$ and $\ga_{-1} :=\eps_{-1}(\ga)$, 
one gives $\ga_1\cap \ga_{-1}=\{0\}$. Consequently, their direct sum $\ga=\ga_1\oplus \ga_2$ is a $\bz_2$-graded $*$-algebra. Therefore, a $\bz_2$-graded $*$-algebra is a pair $(\ga,\th)$, where $\ga$ is an involutive $*$-algebra, and $\th$ an involutive $*$-automorphism on $\ga$. Following \cite{CDF}, we say that $\theta$ is a $\bz_2$-grading of $\ga$. Moreover, we denote the $*$-subalgebra $\ga_+:=\ga_1$ the {\it even part}, and the subspace $\ga_-:=\ga_{-1}$ the {\it odd part} of $\ga$, respectively. Note that $\eps_1$ is a conditional expectation, \emph{i.e.} a positive $\ga_{+}$-module projection from $\ga$ to $\ga_{+}$ (see \cite{CRZ}). Thus, for any $a\in\ga$, we can write $a=a_+  +a_-$, with $a_+\in\ga_+$, $a_-\in\ga_-$,
and this decomposition is unique. In addition, one gets $\th(a_+)=a_+$, $\th(a_-)=-a_-$. Taking $\th=\id_{\ga}$, one sees that any $*$-algebra $\ga$ is equipped with a $\bz_2$ trivial grading. Here, $\ga_+=\ga$ and $\ga_-=\{0\}$. \\  
A $\bz_2$-graded Hilbert space is a pair $(\ch,\G)$, where $\ch$ is a (complex) Hilbert space and $\G$ a self-adjoint unitary acting on $\ch$. Note that $\ch$ decomposes into a direct sum
$\ch=\ch_{+}\oplus\ch_{-}$, where $\ch_{+}:=\ker(I-\G)$, $\ch_{-}:=\ker(I+\G)$, and $I$ is the identity operator. Vectors belonging to $\ch_{+}$ ($\ch_{-}$) are referred to as \emph{even} (\emph{odd}) vectors, and elements belonging to any of these subspaces are collectively referred to as homogeneous vectors. The grade $\partial(\xi)$ of any homogeneous vector $\xi$ is $1$ or $-1$, according
to whether it belongs to $\ch_+$ or $\ch_-$, respectively.
\begin{defn}
\label{def:commgrad}
$\bz_2$-graded $C^*$-algebras are said to \emph{commute with grading}, $(C)_{\bz_2}$, if for each $a_1\in\ga_1$ and $a_2\in\ga_2$, one has $a_1a_2=\eps(a_1,a_2)a_2a_1$, where 
\begin{equation*}
\eps(a_1,a_2):=\begin{cases}
         -1, & \mbox{if} \, \partial(a_1)=\partial(a_2)=-1 \\
         1, & \mbox{otherwise}.
       \end{cases}
\end{equation*}
\end{defn}
Let $(\ga_i,\th_i)$, $i=1,2$, $\bz_2$-graded $*$-algebras. The map $T:\ga_1\rightarrow \ga_2$ is said to be {\it even} if $T\circ\th_1=\th_2\circ T$. When $(\ga_2,\th_2)=(\bc,\id_{\bc})$, a functional $f:\ga_1\rightarrow \bc$ is even if and only if $f\circ\th_1=f$. In what follows, we will denote the convex subset of all even states by $\cs_+(\ga)$. \\
Suppose that $(\ga,\th)$ is a $\bz_2$-graded $C^*$-algebra and $\f\in\cs_+(\ga)$. Let $(\ch_\f,\pi_\f, \xi_\f, \G_{\th,\f})$ be the GNS covariant representation of $\f$, where $\pi_{\f}: \ga \to \cb(\ch_{\pi})$ is a $*$-representation on $\ga$, with $\ch_{\pi}$ the corresponding Hilbert space, and the unitary self-adjoint $\G_{\th,\f}$ is such that $\Gamma_{\theta, \varphi}\pi_{\varphi}(a)\xi_\f=\pi_\f(\theta(a))\xi_\f$. Consequently, $\pi_{\f}$ is grading-equivariant, \emph{i.e.} verifies $\pi_\f(\th(a))=\G_{\th,\f}\pi_\f(a)\G_{\th,\f}$, $a\in\ga$. Then, putting $\g:=\ad_{\G_{\th,\f}}$ the adjoint action, $\ad_{\G}(\cdot):=\G\cdot \G^*$, $(\cb(\ch),\g)$ is a $\bz_2$-graded $C^*$-algebra and $\G$ is even.\\
A non-degenerate representation on $C^*$-algebras with trivial grading is the direct sum of a family of cyclic subrepresentations (see \cite[Proposition 2.3.6.]{BR1}). The following proposition shows that any grading equivariant representation $\pi_\f$ is unitarily equivalent to the direct sum of cyclic representations coming from even states. 
\begin{prop}
\label{prop:cyclic}
Let $\pi:\ga \to \cb(\ch_{\pi})$ be a grading-equivariant representation and denote by $S$ a proper subspace of $\cs_{+}(\ga)$. Then
$$
\pi=\bigoplus_{\om \in S \subset \cs_{+}(\ga)}\pi_{\om}
$$
up to unitary equivalence.
\end{prop}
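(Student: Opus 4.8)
The plan is to adapt the classical proof that a non-degenerate representation decomposes into cyclic subrepresentations (the cited \cite[Proposition 2.3.6]{BR1}), the new ingredient being to arrange that every cyclic summand is invariant under the grading unitary and is generated by a homogeneous vector, so that the induced vector state is even. Since $\pi$ is grading-equivariant, there is a self-adjoint unitary $\G$ on $\ch_{\pi}$ with $\pi(\th(a))=\G\pi(a)\G$ for all $a\in\ga$, and $\ch_{\pi}$ splits as $\ch_{+}\oplus\ch_{-}$ with $\ch_{\pm}=\ker(I\mp\G)$. I would work throughout with homogeneous unit vectors, \emph{i.e.} vectors $\xi$ with $\G\xi=\pm\xi$.

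First I would check the two properties that make a homogeneous vector usable. Let $\xi$ be a homogeneous unit vector and set $\om_{\xi}(a):=\langle\xi,\pi(a)\xi\rangle$. Evenness of $\om_{\xi}$ follows from $\G\xi=\pm\xi$: indeed $\om_{\xi}(\th(a))=\langle\xi,\G\pi(a)\G\xi\rangle=\langle\G\xi,\pi(a)\G\xi\rangle=\om_{\xi}(a)$. Next, the cyclic subspace $\ch_{\xi}:=\overline{\pi(\ga)\xi}$ is $\G$-invariant, since $\G\pi(a)\xi=\pi(\th(a))\G\xi=\pm\pi(\th(a))\xi\in\pi(\ga)\xi$; hence the restriction $\pi|_{\ch_{\xi}}$ is again grading-equivariant, with implementing unitary $\G|_{\ch_{\xi}}$, and $(\ch_{\xi},\pi|_{\ch_{\xi}},\xi,\G|_{\ch_{\xi}})$ is a covariant cyclic representation whose state is the even state $\om_{\xi}$. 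By uniqueness of the GNS covariant representation it is unitarily equivalent to $(\ch_{\om_{\xi}},\pi_{\om_{\xi}},\xi_{\om_{\xi}},\G_{\th,\om_{\xi}})$.

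Finally I would run a Zorn's lemma argument on families of homogeneous unit vectors whose cyclic subspaces are mutually orthogonal, ordered by inclusion, and take a maximal such family $\{\xi_{i}\}_{i}$. Setting $\ck:=\bigoplus_{i}\ch_{\xi_{i}}$, both $\ck$ and $\ck^{\perp}$ are $\pi$-invariant (because $\pi$ is a $*$-representation) and $\G$-invariant (because each $\ch_{\xi_{i}}$ is). If $\ck^{\perp}\neq\{0\}$, pick a nonzero vector there; since $\ck^{\perp}$ is $\G$-invariant its even and odd parts also lie in $\ck^{\perp}$, so at least one of them is a nonzero homogeneous vector $\eta\in\ck^{\perp}$, and then $\ch_{\eta}\subseteq\ck^{\perp}$ would contradict maximality. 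Here non-degeneracy, automatic since everything is unital, guarantees $\eta\in\ch_{\eta}$, so $\ch_{\eta}\neq\{0\}$. Hence $\ck=\ch_{\pi}$, and writing $S:=\{\om_{\xi_{i}}\}_{i}\subset\cs_{+}(\ga)$ gives $\pi=\bigoplus_{\om\in S}\pi_{\om}$ up to unitary equivalence. The one place demanding care, and the main difference from the ungraded setting, is keeping the whole construction compatible with $\G$: one must verify that the complement of an invariant subspace stays $\G$-invariant and that a nonzero complement vector can always be replaced by a nonzero homogeneous one, which is exactly what the $\pm1$-eigenspace splitting of $\G$ provides.
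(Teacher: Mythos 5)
Your proof is correct and follows essentially the same route as the paper: cyclic subspaces generated by homogeneous (even or odd) unit vectors, verification that the associated vector states are even, identification with the covariant GNS representations, and a Zorn's lemma argument to exhaust $\ch_{\pi}$. If anything, you are slightly more careful than the paper, since you explicitly verify the $\G$-invariance of each cyclic subspace and of the orthogonal complement --- facts the paper's proof uses implicitly when it passes to a homogeneous vector in $M^{\perp}$ --- and you treat the even and odd cases uniformly instead of separately.
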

\begin{proof}
One can suppose, without loss of generality, that $\G\neq \id_{\ch_{\pi}}$ (otherwise, there would be nothing to prove).\\
Let $\xi \in \ch_{\pi}$ be even. Note that the cyclic subspace generated by $\xi$   $$M:=\overline{\pi(\ga)\xi}$$  
is invariant by definition. Moreover, the vector state $\om$ associated with $\xi$, namely
$
\om(a):=\langle \pi(a)\xi,\xi\rangle
$, is even. Indeed, one has
\begin{align*}
\om(\th(a))&=\langle \pi(\th(a))\xi,\xi\rangle=\langle \G\pi (a)\G\xi,\xi\rangle=\langle \pi(a)\G\xi,\G\xi\rangle\\&=\langle \pi(a)\xi,\xi\rangle=\om(a)\, .
\end{align*}
By uniqueness (up to unitary equivalence) of the GNS representation, we have
$\pi_{\upharpoonright M}\cong\pi_{\om}$.\\
If $M=\ch_{\pi}$, there is nothing to prove.
Suppose $M$ is a proper subspace of $\ch_{\pi}$. Then  $M^{\perp}$ is different from $\{0\}$.
If $M^\perp \cap \ch_{\pi,+} \neq \{0\}$, then another cyclic subspace can be obtained in the same way as above.\\
If $M^{\perp} \cap \ch_{\pi,+} =\{0\}$, then we take $\xi$ in $M^\perp\cap \ch_{\pi,-} $.
Now the corresponding vector state is still even, for
\begin{align*}
\om(\th(a))&=\langle \pi(\th(a))\xi,\xi\rangle=\langle \G\pi (a)\G\xi,\xi\rangle=\langle \pi(a)\G\xi,\G\xi\rangle\\
&=\langle \pi(a)(-\xi),(-\xi)\rangle=\langle \pi(a)\xi,\xi\rangle=\om(a)\,.
\end{align*}
Finally, a standard application of Zorn's Lemma yields the sought decomposition.
\end{proof}
\begin{prop}
\label{prop:enveloping}
The unitary $V: \bigoplus_{\om \in S \subset \cs_{+}(\ga)} \ch_{\om} \to \ch_{\pi}$ which realizes the equivalence between $\pi$ and $\bigoplus_{\om \in S \subset \cs_{+}(\ga)}\pi_{\om}$ can be chosen to be grading-equivariant.
\end{prop}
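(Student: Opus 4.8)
The plan is to build $V$ one summand at a time out of the canonical GNS intertwiners and then to track how $\G$ acts, keeping careful count of the \emph{parity} of the vectors that generate the cyclic subspaces in Proposition \ref{prop:cyclic}. Recall that the decomposition there is produced by an orthogonal family of cyclic subspaces $M_\om=\overline{\pi(\ga)\xi^{(\pi)}_\om}$, each generated by a homogeneous vector $\xi^{(\pi)}_\om\in\ch_\pi$ (even in the generic step, odd in the step where $M^\perp\cap\ch_{\pi,+}=\{0\}$), whose associated vector state is the even state $\om$. For each of these, uniqueness of the GNS construction furnishes a unitary $V_\om\colon\ch_\om\to M_\om$ determined by $V_\om\,\pi_\om(a)\xi_\om=\pi(a)\xi^{(\pi)}_\om$; in particular $V_\om\xi_\om=\xi^{(\pi)}_\om$ and $V_\om\pi_\om(a)=\pi(a)V_\om$. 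The candidate is $V:=\bigoplus_\om V_\om$, which already implements the equivalence $\bigoplus_\om\pi_\om\cong\pi$; the entire content of the statement is the behaviour of the grading.

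First I would check that each $M_\om$ is $\G$-invariant. Since $\xi^{(\pi)}_\om$ is homogeneous and $\pi$ is grading-equivariant, $\G\pi(a)\xi^{(\pi)}_\om=\pi(\th(a))\G\xi^{(\pi)}_\om=\partial(\xi^{(\pi)}_\om)\,\pi(\th(a))\xi^{(\pi)}_\om\in M_\om$, so $\G$ restricts to a self-adjoint unitary on $M_\om$. The key step is then to relate $\G\upharpoonright_{M_\om}$ to the GNS grading $\G_{\th,\om}$. Evaluating the defining identity $\G_{\th,\om}\pi_\om(a)\xi_\om=\pi_\om(\th(a))\xi_\om$ at the unit shows that the cyclic vector of an even state is always even, $\G_{\th,\om}\xi_\om=\xi_\om$. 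Using this together with $V_\om\xi_\om=\xi^{(\pi)}_\om$ and covariance, one computes on the dense set $\pi_\om(\ga)\xi_\om$
\[
\G V_\om\,\pi_\om(a)\xi_\om=\G\pi(a)\xi^{(\pi)}_\om=\partial(\xi^{(\pi)}_\om)\,\pi(\th(a))\,\xi^{(\pi)}_\om=\partial(\xi^{(\pi)}_\om)\,V_\om\,\G_{\th,\om}\,\pi_\om(a)\xi_\om,
\]
so that $\G\upharpoonright_{M_\om}V_\om=V_\om\big(\partial(\xi^{(\pi)}_\om)\,\G_{\th,\om}\big)$ on all of $\ch_\om$.

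To conclude, I would equip each summand with the self-adjoint unitary $\tilde\G_\om:=\partial(\xi^{(\pi)}_\om)\,\G_{\th,\om}$. Flipping the sign of a grading operator preserves the covariance relation $\tilde\G_\om\pi_\om(a)\tilde\G_\om=\pi_\om(\th(a))$, so $(\ch_\om,\pi_\om,\tilde\G_\om)$ is again a covariant GNS representation of $\om$; setting $\hat\G:=\bigoplus_\om\tilde\G_\om$ turns $\bigoplus_\om\ch_\om$ into a $\bz_2$-graded Hilbert space on which each $\pi_\om$ is covariant. The displayed identity then reads $\G V=V\hat\G$, i.e. $V$ is grading-equivariant, which is the assertion.

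The point that needs care — and the only real obstacle — is precisely the sign $\partial(\xi^{(\pi)}_\om)$ arising from the odd generators, i.e. from the case $M^\perp\cap\ch_{\pi,+}=\{0\}$ of Proposition \ref{prop:cyclic}. There the canonical GNS grading $\G_{\th,\om}$, which necessarily fixes the even cyclic vector $\xi_\om$, cannot coincide with $\G\upharpoonright_{M_\om}$, which negates the odd generator $\xi^{(\pi)}_\om$: the two differ by an overall $-1$. Consequently $V$ does \emph{not} intertwine $\G$ with the naive direct sum $\bigoplus_\om\G_{\th,\om}$ (indeed this can already fail at the level of even/odd dimensions, since every $\ch_\om$ carries a nonzero even cyclic vector), and it is essential that the grading on such a summand be taken as $-\G_{\th,\om}$. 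The observation that makes this harmless is that $\pm\G_{\th,\om}$ both implement $\th$ in $\pi_\om$, so passing from $\bigoplus_\om\G_{\th,\om}$ to $\bigoplus_\om\tilde\G_\om$ changes neither the representation $\bigoplus_\om\pi_\om$ nor the graded-Hilbert-space structure up to the relevant equivalence.
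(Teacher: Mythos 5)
Your construction of $V=\bigoplus_\om V_\om$ from the GNS intertwiners and the computation $\G V_\om = \partial(\xi^{(\pi)}_\om)\,V_\om\,\G_{\th,\om}$ on $M_\om$ are correct, and you have isolated the genuine difficulty (the sign coming from odd generators). The gap is in your final paragraph. The proposition, in the paper's reading, asks for equivariance with respect to the gradings $\bigoplus_\om \G_\om$ built from the \emph{canonical} GNS unitaries: the covariance condition $\G_{\th,\om}\pi_\om(a)\xi_\om=\pi_\om(\th(a))\xi_\om$ determines $\G_{\th,\om}$ uniquely and forces $\G_{\th,\om}\xi_\om=\xi_\om$, so your $\tilde\G_\om=-\G_{\th,\om}$ is \emph{not} "again a covariant GNS representation of $\om$" in the paper's sense --- it negates the cyclic vector and violates the defining relation, even though it still implements $\th$. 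Your bridging claim that replacing $\bigoplus_\om\G_{\th,\om}$ by $\bigoplus_\om\tilde\G_\om$ changes nothing "up to the relevant equivalence" is unjustified and false in general: $\G_{\th,\om}$ and $-\G_{\th,\om}$ are conjugate by a unitary of $\ch_\om$ only when $\ch_{\om,+}$ and $\ch_{\om,-}$ have equal dimension, which is exactly the parity-dimension obstruction you yourself invoke two sentences earlier to explain why the naive direct sum fails. As written, the two halves of your last paragraph contradict each other, and what you prove is equivariance with respect to a modified grading on the direct sum, not the stated result.

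The idea you are missing is the paper's: instead of flipping gradings, re-choose the \emph{generator} of the offending cyclic subspace so that it is even, and the sign never appears. If no even vector can be found in $M^{\perp}$ and $\xi\in M^{\perp}$ is odd, then for odd $a\in\ga$ the vector $\pi(a)\xi$ lies in $M^{\perp}$ (which is $\pi$-invariant) and is even, since $\G\pi(a)\xi=\pi(\th(a))\G\xi=(-\pi(a))(-\xi)=\pi(a)\xi$; generating with such even vectors, each $V_\om$ sends the even cyclic vector $\xi_\om$ to an even generator and hence even (odd) vectors to even (odd) ones, which gives $\G V=V\bigl(\bigoplus_\om\G_\om\bigr)$ for the canonical gradings. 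One caveat, which affects the paper's proof as well: this requires $\pi(a)\xi\neq 0$ for some odd $a$; in the degenerate situation where $\pi(\ga_-)$ annihilates $M^{\perp}$ (e.g.\ $\th$ trivial and $\G=-I$ on that block, so that $M^{\perp}$ consists entirely of odd vectors) no even generator exists, equivariance with respect to the canonical gradings genuinely fails, and a sign modification like yours becomes unavoidable --- but then one is proving a different statement. Relative to the proposition as the paper states and proves it, your argument has a real hole at exactly the step you flagged as "the only real obstacle."
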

\begin{proof}
Let us denote by $\G$ and $\bigoplus_{\om \in S \subset \cs_{+}(\ga)} \G_{\om}$ the unitaries (gradings) on the Hilbert spaces $\ch_\pi$ and $\bigoplus_{\om \in S \subset \cs_{+}(\ga)} \ch_{\om}$, respectively. We are requiring that the equality $V \left(\bigoplus_{\om \in S \subset \cs_{+}(\ga)} \G_{\om}\right)=\G V$ is satisfied.
Let $\xi=(\xi_{\om})_{\om}$ be a vector in $\bigoplus_{\om \in S \subset \cs_{+}(\ga)} \ch_{\om}$. It is clear that $V(\G_{\om}(\xi_{\om}))=\G(V(\xi_{\om}))$ on each component $\xi_{\om}$ if and only if $V$ sends even (odd) vectors to even (odd) ones.
Returning to the previous proposition, however, we note that an even vector can always be found. Indeed, if $\xi$ is even in $\ch_{\pi}$, the invariant space $M:=\overline{\pi(\ga)\xi}$ allows us to establish the equivalence $\pi_{\om} \simeq \pi_{\upharpoonright M}$, where the unitary given by the GNS construction sends even vectors in even ones.
If now $M \neq \ch_{\pi}$ and an even vector $\xi$ cannot be found in $M^{\perp}$, we can consider the vector $\pi(a)\xi \in M^{\perp}$, which is still even if $\xi$ and $\pi(a)$ are odd.
This ends the proof.
\end{proof}
Under the proposition above, it is then quite natural to define a $\bz_2$-graded analogue of the \emph{enveloping von Neumann algebra}. With this goal in mind,  we first introduce what we call the $\bz_2$-\emph{graded universal representation} as
$$
\pi_{\G}^{e}:=\bigoplus_{\om \in \cs_{+}(\ga)} \pi_{\om}
$$
By \cite[Proposition 2.1]{CRZ},  $\pi_{\G}^{e}$ is faithful. The \emph{$\bz_2$-graded enveloping von Neumann algebra} is $\mathcal{R}_{e}:=\pi_{\G}^{e}(\ga)^{''}$.\\
We will think of $\mathcal{R}_e$ as a $\bz_2$-graded algebra with the grading $\G_e:=\bigoplus_{\om \in \cs_{+}(\ga)} \G_{\omega}$, induced by the unitaries $\G_{\om}: \ch_{\om} \to \ch_{\om}$.\\
The von Neumann algebra $(\mathcal{R}_{e}, \G_e)$ enjoys the following universal property: for every grading-equivariant representation $\pi$ of $\ga$,
there exists a unique (grading-equivariant) epimorphism $\rho:\mathcal{R}_{e}\rightarrow \pi(\ga)''$ making the diagram below commutative
\[
\begin{tikzcd}[column sep=3.5em, row sep=3.5em]
\ga \arrow{r}{\pi_{\G}^{e}}  \arrow{rd}{\pi} 
  & \mathcal{R}_{e} \arrow{d} {\rho}\\
    & \pi(\ga)^{''}
\end{tikzcd}
\]
where $\pi(\ga)''$ is thought of as a $\bz_2$-graded algebra with respect to the adjoint action of  the unitary $\G$.\\
Clearly, $\rho$ is the unique extension to the algebra $\mathcal{R}_e$ of the map $\pi_{\G}^e(a) \to \pi(a)$,\, $a \in \ga$, which is well defined since $\pi_{\G}^e$ is faithful. 
In addition, $\rho$ is grading-equivariant. This follows by Proposition \ref{prop:enveloping} and the definition of $\G_e$ on $\mathcal{R}_e$.
\begin{ex}
\label{exa:enveloping}
Let $(\ga,\th)$ be $(\mathcal{K}(\ch),\g\lceil_{\mathcal{K}(\ch)})$, where $\mathcal{K}(\ch)$ denotes the compact operators on $\ch$. We denote by $\pi_0$ the $*$-representation of $\mathcal{K}(\ch)$ on $\ch$ given by the natural inclusion $\mathcal{K}(\ch) \subset \mathcal{B}(\ch)$. As one could expect, the $\bz_{2}$-graded enveloping von Neumann algebra $(\mathcal{R}_e,\G_e)$ is $*$-isomorphic with $(\cb(\ch),\g)$. This can be seen by showing that the pair $(\cb(\ch),\g)$ satisfies the universal property that uniquely determines $(\mathcal{R}_e,\G_e)$.
In order to show that the universal property holds, it is enough to remember that any $*$-representation of $\mathcal{K}(\ch)$ is unitarily equivalent
to a direct sum of (at most as many as the real numbers) representations each of which is  $\pi_0$.
\end{ex}

Finally, we recall some fundamental notions concerning the tensor product of $C^*$-algebras and the norms defined on it. \\
Consider the $C^*$-algebras $\ga_1$ and $\ga_2$, and denote by $\ga_1\otimes \ga_2$ the algebraic tensor product $\ga_1\odot \ga_2$ with the product and involution given by
$$
(a_1\otimes a_2)\cdot(a_1'\otimes a_2'):=a_1a_1'\otimes a_2a_2'\,,\quad (a_1\otimes a_2)^*:=a_1^*\otimes a_2^*\,,
$$
for all $a_1,a_1'\in\ga_1$, $a_2,a_2'\in\ga_2$. Let us denote by $\ga_1\otimes_{\max} \ga_2$ and $\ga_1\otimes_{\min} \ga_2$ the completion of $\ga_1\otimes \ga_2$ with respect to the maximal and minimal $C^*$-cross norm, respectively \cite[IV.4.]{T1}.\\
If one takes $\om_1\in\cs(\ga_1)$ and $\om_2\in\cs(\ga_2)$, their product state $\om_1 \otimes \om_2 \in\cs(\ga_1\otimes_{\min} \ga_2)$ is well defined also on $\ga_1\otimes_{\max} \ga_2$, and consequently the notation $\om_1 \otimes \om_2  \in\cs(\ga_1\otimes \ga_2)$ will be used in the sequel.\\
Suppose now that $(\ga_1,\th_1)$ and $(\ga_2,\th_2)$ are $\bz_2$-graded $*$-algebras, and consider the linear space $\ga_1\odot \ga_2$. In what follows, we recall the definition of the involutive $\bz_2$-graded tensor product, also called Fermi tensor product, which will be henceforth denoted by
$\ga_1\hat{\otimes} \ga_2$ (see \cite[Section 14.4]{Black}). Recall that, for $i=1,-1$, $\ga_{1,i}$ denotes the even or the odd part of $\ga_1$, respectively. Analogous notation can be applied to $\ga_{2,i}$. \\
For homogeneous elements $a_1\in\ga_1$, $a_2\in\ga_2$ and $i,j\in\bz_2$, we set
\begin{eqnarray*}
\eeps(i,j):=\left\{\!\!\!\begin{array}{ll}
                      -1 &\text{if}\,\, i=j=-1\,,\\
                     \,\,\,\,\,1 &\text{otherwise}\,.
                    \end{array}
                    \right.
\end{eqnarray*}
Given $x,y\in\ga_1\odot\ga_2$ with 
\begin{align*}
&x:=\oplus_{i,j\in\bz_2}x_{i,j}\in\oplus_{i,j\in\bz_2}(\ga_{1,i}\odot\ga_{2,j})\,,\\
&y:=\oplus_{i,j\in\bz_2}y_{i,j}\in\oplus_{i,j\in\bz_2}(\ga_{1,i}\odot\ga_{2,j})\,,
\end{align*}
the involution, which, by a minor abuse of notation, we continue
to denote by $*$, and  the multiplication on $\ga_1\hat{\otimes} \ga_2$ are defined as (see also \emph{e.g.} \cite{CDF}) 
\begin{align*}
x^*:=&\sum_{i,j\in\bz_2}\eeps(i,j)x_{i,j}^* & xy:=&\sum_{i,j,k,l\in\bz_2}\eeps(j,k)x_{i,j}{\bf\cdot}y_{k,l}\,.
\end{align*}
The $*$-algebra thus obtained also carries a $\bz_2$-grading, which
is induced by the $*$-automorphism $\hat{\th}=\th_1\, \hat{\otimes}\, \th_2$ given on the elementary tensors by
\begin{equation*}
(\th_1\,\hat{\otimes} \, \th_2)(a_1\, \hat{\otimes}\, a_2):=\th_1(a_1)\, \hat{\otimes}\, \th_2(a_2)\,,\quad a_1\in\ga_1\,,\,\, a_2\in\ga_2\,.
\end{equation*}
where $a_1\, \hat{\otimes}\, a_2$ is nothing but  $a_1\otimes a_2$ thought of
as an element of the $\bz_2$-graded $*$-algebra $\ga_1\, \hat{\otimes}\, \ga_2$,
since
$\ga_1\, \hat{\otimes}\, \ga_2=\ga_1\otimes \ga_2\,$ as linear spaces.\footnote{As of now, we will use $a_1\otimes a_2$ and $a_1\, \hat{\otimes}\, a_2$ interchangeably 
when no confusion can occur}. The even and odd part of the Fermi product are, respectively
\begin{equation*}
\begin{split}
\big(\ga_1\, \hat{\otimes} \ga_2\big)_+:=&\big(\ga_{1,+}\odot\ga_{2,+}\big)\oplus\big(\ga_{1,-}\odot\ga_{2,-}\big)\,,\\
\big(\ga_1\, \hat{\otimes} \, \ga_2\big)_-:=&\big(\ga_{1,+}\odot\ga_{2,-}\big)\oplus\big(\ga_{1,-}\odot\ga_{2,+}\big)\,.
\end{split}
\end{equation*}
For $\om_i\in\cs(\ga_i)$, $i=1,2$, the state $\om_1 \otimes \om_2$ has a counterpart in $\ga_1  \hat{\otimes} \ga_2$ by means of the product functional $\om_1 \times \om_2$, defined as usual by
$$
\om_1\times \om_2\bigg(\sum_{j=1}^n a_{1,j} \hat{\otimes} \, a_{2,j}\bigg):=\sum_{j=1}^n \om_1(a_{1,j})\om_2(a_{2,j})\,,
$$
for all $\sum_{j=1}^n a_{1,j} \hat{\otimes}\, a_{2,j}\in \ga_1\, \hat{\otimes}\, \ga_2$. Unlike the case of a trivial grading, the map defined above is not necessarily positive. However, it is positive as soon as at least one of the two states is even (see \cite[Proposition 2.6]{CRZ2}).\\
We also recall  that the spatial norm on the Fermi tensor product of $\bz_2$-graded $C^*$-algebras is defined in terms of the GNS representations of products
of even states: 
$$
\|x\|_{\min}:=\sup \{\|\pi_{\om_1\times\om_2}(x)\|: \om_1\in\cs_+(\ga_1)\,,\,\, \om_2\in\cs_+(\ga_2)\}\,,
$$
for all $x\in\ga_1\,\hat{\otimes}\,\ga_2$.
As shown in \cite[Theorem 4.12]{CRZ},  this norm is minimal besides being a cross norm, as is the maximal one introduced in \cite{CDF}. The latter is given  by
$$
\|x\|_{\max}:=\sup\{\|\pi(x)\|: \pi\,\,\text{is a representation}\}\,,
$$
for all $x\in\ga_1\, \hat{\otimes}\,\ga_2$, and it
is obviously the biggest norm on $\ga_1\, \hat{\otimes}\,\ga_2$.\\

\section{$C^*$-independence for $\bz_2$-graded $C^*$-subalgebras} 
\label{sec:C*independence}
In this section, after recalling some definitions and results concerning the $C^*$-independence for $C^*$-algebras with trivial grading, we provide an analogous definition for $\bz_2$-graded algebras, and we prove that the Schlieder condition  $(S)$ is a necessary condition for $C^*$-independence in this setting. Successively, we show that it is also a sufficient condition for $C^*$-independence of $C^*$-subalgebras commuting with grading. \\

We start by summarizing without proofs the relevant material on $C^*$-independence of subalgebras of a given $C^*$-algebra $\ga$ with trivial grading. \\
First, we recall that two subalgebras of a $C^*$-algebra $\ga$, $\ga_1$ and $\ga_2$,  are said to be \emph{$C^*$-independent}, $(C^*I)$, or \emph{statistically independent}, if taking a state $\f_1$ of $\ga_1$ and a state $\f_2$ of $\ga_2$, there exists a state $\f$ of $\ga$ whose restriction to $\ga_1$ equals $\f_1$ and the restriction to $\ga_2$ equals $\f_2$. Moreover, 
$\ga_1$ and $\ga_2$ satisfy the Schlieder property $(S)$ if $a_1a_2\neq 0$ for every non vanishing elements $a_1\in\ga_1$ and $a_2\in\ga_2$.  \\
We recall that in \cite{Sch}, the author proved that $(S)$ is a necessary condition for the $C^*$-independence of $C^*$-algebras. Moreover, if $\ga_1$ and $\ga_2$ commute elementwise, $(S)$ is also a sufficient condition for $(C^*I)$. We refer the reader to \cite{Roos} for a detailed proof.\\

Consider $(\ga,\th)$, a $\bz_2$-graded $C^*$-algebra with two $C^*$-subalgebras $(\ga_i,\th_i)$, $i=1,2$, with $\th_i:=\th\lceil\ga_i$, for each $i=1,2$. Suppose these two subalgebras share the identity of the $C^*$-algebra $\ga$, denoted by $\idd_{\ga}$. Assume also that $\th_i(\ga_i)\subset \ga_i$, $i=1,2$.\\
We proceed by giving our notion of $C^*$-independence in the $\bz_2$-graded setting. 
\begin{defn}
\label{def:Z2C*indep}
$C^*$-subalgebras $(\ga_1,\th_1)$ and $(\ga_2,\th_2)$ of the $\bz_2$-graded $C^*$-algebra $(\ga,\th)$ are $C^*$-independent, $(C^*I)_{\bz_2}$, if for any even states $\f_1\in\cs_+(\ga_1)$ and $\f_2\in\cs_+(\ga_2)$, there exists 
 a common even extension to $\ga$, that is, there is an even state $\f\in\cs_+(\ga)$ such that
 \begin{align*}
 \f\lceil\ga_1 &=\f_1 &  &\text{and} &\f\lceil\ga_2&=\f_2.
 \end{align*}
\end{defn}
The following result shows that the Schlieder property is necessary for $C^*$-independence.  
\begin{thm}
\label{thm:Z2Schlieder}
Let $(\ga_1,\th_1)$ and $(\ga_2,\th_2)$ two $\bz_2$-graded $C^*$-subalgebras of $(\ga,\th)$. Then $(C^*I)_{\bz_2} \Rightarrow (S)$. 
\end{thm}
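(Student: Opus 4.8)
The plan is to follow the strategy of Schlieder and Roos, transposed to the even setting, arguing by contraposition. Suppose $(S)$ fails, so that there are nonzero $A\in\ga_1$ and $B\in\ga_2$ with $AB=0$. Passing to the positive elements $P:=A^*A\in\ga_1$ and $Q:=BB^*\in\ga_2$, which are nonzero because $A,B\neq 0$, one obtains $PQ=A^*(AB)B^*=0$. Hence it suffices to rule out the existence of nonzero positive $P\in\ga_1$, $Q\in\ga_2$ with $PQ=0$, and the aim is to contradict $(C^*I)_{\bz_2}$ by producing marginal even states for which any common even extension would be forced to vanish at the identity $\idd_\ga$.

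The engine is the GNS picture of a common even extension. Assume first that $P$ and $Q$ are even (grading invariant); after normalising $\|P\|=\|Q\|=1$ I would produce even states $\f_1\in\cs_+(\ga_1)$ and $\f_2\in\cs_+(\ga_2)$ with $\f_1(P)=\f_2(Q)=1$. Such states exist by symmetrisation: if $\psi$ is any state attaining $\psi(P)=\|P\|$, then $\bar\psi:=\tfrac12(\psi+\psi\circ\th_1)$ is even and, since $\th_1(P)=P$, still satisfies $\bar\psi(P)=1$. By $(C^*I)_{\bz_2}$ there is an even common extension $\f\in\cs_+(\ga)$, whose GNS cyclic vector $\xi_\f$ is even, i.e.\ $\G\xi_\f=\xi_\f$ (take $a=\idd_\ga$ in $\G\pi_\f(a)\xi_\f=\pi_\f(\th(a))\xi_\f$). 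Since $0\le P\le\idd_\ga$ and $\f(P)=1=\|P\|$, the equality $\langle(\idd_\ga-\pi_\f(P))\xi_\f,\xi_\f\rangle=0$ together with $\idd_\ga-\pi_\f(P)\ge0$ forces $\pi_\f(P)\xi_\f=\xi_\f$, and likewise $\pi_\f(Q)\xi_\f=\xi_\f$. Then
$$\xi_\f=\pi_\f(P)\xi_\f=\pi_\f(P)\pi_\f(Q)\xi_\f=\pi_\f(PQ)\xi_\f=0,$$
contradicting $\|\xi_\f\|=1$. This already settles the case of homogeneous $A,B$, for which $P=A^*A$ and $Q=BB^*$ are automatically even.

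The passage to non-homogeneous elements is the hard part, and is where I expect the main obstacle. For a general $A$ the element $A^*A$ need not be grading invariant and, crucially, even states need not attain its norm (for instance on $M_2(\bc)$ with grading $\ad_\G$, $\G$ a diagonal symmetry, the even-state values of a rank-one projection onto a non-homogeneous vector stay $\le\tfrac12$ while its norm is $1$), so the symmetrisation above loses the extremal value and the clean argument breaks. To handle this I would replace $P,Q$ by their even parts $(A^*A)_+$ and $(BB^*)_+$, which are even, positive and nonzero, choose even states attaining their norms as before, and work inside the GNS space of the common even extension, using the covariance relation $\pi_\f(\th_1(P))=\G\pi_\f(P)\G$ together with $\G\xi_\f=\xi_\f$ to resolve $\pi_\f(P)\xi_\f$ and $\pi_\f(Q)\xi_\f$ into their homogeneous components; the vanishing $\pi_\f(PQ)=0$ then has to be combined with this decomposition to force a contradiction from the odd contributions. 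Equivalently, one may first record that, via the conditional expectation onto the even part, $(C^*I)_{\bz_2}$ for $(\ga_1,\ga_2)$ amounts to ordinary $C^*$-independence of the even subalgebras $\ga_{1,+},\ga_{2,+}$ inside $\ga_+$, so that the classical Schlieder theorem applies to the even parts; the genuinely delicate step—the obstacle—is to transfer the resulting orthogonality back to the original, possibly non-homogeneous, elements $A$ and $B$.
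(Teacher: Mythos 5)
Your first step and your treatment of the homogeneous case are correct and coincide in substance with the paper's own argument: the paper also reduces to positive elements via $a_1^*a_1$ and $a_2a_2^*$, picks even states with $\f_i(a_i)=\|a_i\|=1$, takes a common even extension granted by $(C^*I)_{\bz_2}$, and observes that the corresponding GNS vector inside the $\bz_2$-graded universal representation is fixed by $\pi_{\G}^{e}(a_1)$ and $\pi_{\G}^{e}(a_2)$, so that $\pi_{\G}^{e}(a_1a_2)\neq 0$ and faithfulness of $\pi_{\G}^{e}$ gives $a_1a_2\neq0$. Your symmetrisation $\bar\psi=\tfrac12(\psi+\psi\circ\th_1)$, valid when the positive element is even, is exactly the justification needed for the norm-attainment step; the paper glosses this point, deriving it only from the fact that even states separate points, which gives no attainment at all.

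The obstacle you flag for non-homogeneous elements is a genuine gap in your proposal --- you never complete that case --- but you should know that it is precisely the step at which the paper's own proof fails: the paper applies norm attainment to the arbitrary positive elements $a_1^*a_1$ and $a_2a_2^*$, which need not be even, and for an even state $\f=\f\circ\eps_1$ one only has $\f(a)\le\|\eps_1(a)\|$, which your $M_2(\bc)$ observation shows can be strictly smaller than $\|a\|$. Worse, no repair along the lines you sketch is possible, because the implication is false for non-homogeneous elements. In $M_2(\bc)$ graded by $\G=e_{11}-e_{22}$, let $x:=e_{12}+e_{21}$ (odd, self-adjoint, $x^2=\idd$) and take $\ga_1=\ga_2=\spn\{\idd,x\}$, a unital $\th$-invariant $C^*$-subalgebra: each $\ga_i$ has exactly one even state, $\alpha\idd+\beta x\mapsto\alpha$, and every even state of $M_2(\bc)$ restricts to it (even states annihilate the off-diagonal part), so $(C^*I)_{\bz_2}$ holds vacuously; yet $(\idd+x)(\idd-x)=\idd-x^2=0$ with both factors nonzero, so $(S)$ fails (a variant with $\ga_1\neq\ga_2$ can be arranged inside $M_2(\bc)\oplus M_2(\bc)$). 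This is consistent with your correct remark that, via the conditional expectation $\eps_1$, the condition $(C^*I)_{\bz_2}$ only constrains the even parts $\ga_{1,+}$ and $\ga_{2,+}$, and hence cannot control products of non-homogeneous elements. What is actually provable --- and what the part of your argument you did complete establishes, following the same engine as the paper --- is the theorem with $(S)$ restricted to homogeneous $a_1,a_2$: then $a_1^*a_1$ and $a_2a_2^*$ are even and positive, your symmetrised states attain their norms, and the fixed-vector argument closes the proof.
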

\begin{proof}
We rephrase the proof of \cite[Theorem 2.5]{GoLuWi} as follows. By \cite[Proposition 2.1]{CRZ}, even states separates points of $\ga_i$, therefore there exist $\f_i\in\cs_+(\ga_i)$ with $\f_1(a_1)=\|a_1\|=1=\|a_2\|=\f_2(a_2)$, for non vanishing positive elements $a_i\in\ga_i$, $i=1,2$. 
By $(C^*I)_{\bz_2}$, there exists an even state $\f\in\cs_+(\ga)$ that extends $\f_1$ and $\f_2$. As a consequence, $\pi_{\G}^{e}(a_1a_2)y=y$, where $\pi_{\G}^{e}$ denotes the $\bz_2$-graded universal representation  and $y$ a unit vector in the Hilbert space $\ch_u:=\bigoplus_{\om\in\cs_+(\ga)}\ch_{\om}$, with $y:=\bigoplus_{\r\in\cs_+(\ga)} y_{\r}$, where $y_{\r}=0$ if $\r\neq \f$ and $y_{\r}= \xi_{\f}$ otherwise. 
This implies that $\pi_{\G}^{e}(a_1a_2)$ is not vanishing, giving thus $a_1a_2\neq0$ since $\pi_{\G}^{e}$ is faithful. \\
Now suppose that $a_1\in\ga_1$ and $a_2\in\ga_2$ are non zero elements (not necessarily positive). Then $a_i^{*}a_i\geq 0$, $i=1,2$, implies that $a_1^{*}a_1a_2a_2^{*}\neq 0$ by the first step of the proof. Therefore $a_1a_2\neq0$, which is the $(S)$ condition. 
\end{proof}
 The next result yields the converse of the previous one when the two $C^*$-subalgebras commute with grading. 
\begin{thm}
\label{thm:RoosZ2}
Let $(\ga_1,\th_1)$ and $(\ga_2,\th_2)$ two $\bz_2$-graded $C^*$-subalgebras of $(\ga,\th)$.  satisfying $(C)_{\bz_2}$. Then $(S) \Rightarrow (C^*I)_{\bz_2}$. 
\end{thm}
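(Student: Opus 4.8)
The plan is to reduce the statement to the classical (ungraded) Roos theorem applied to the even parts of the two algebras, and then to restore evenness by averaging over the grading. The point is that $(C)_{\bz_2}$ forces the even subalgebras to commute \emph{elementwise}, while an even state loses no information upon restriction to its even part. Concretely, $(\ga_1)_+$ and $(\ga_2)_+$ are unital norm-closed $C^*$-subalgebras of $\ga$ (each being the range of a conditional expectation of the form $\eps_1$, hence norm-closed). For even $x\in(\ga_1)_+$ and even $y\in(\ga_2)_+$, condition $(C)_{\bz_2}$ gives $xy=\eps(\partial x,\partial y)\,yx=yx$ since $\eps(1,1)=1$; as commutation is a closed condition, $(\ga_1)_+$ and $(\ga_2)_+$ commute elementwise. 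They also inherit $(S)$: non-zero elements of the even parts are in particular non-zero elements of $\ga_1,\ga_2$, so their product is non-zero. Hence $(\ga_1)_+$ and $(\ga_2)_+$ are commuting $C^*$-subalgebras of $\ga$ satisfying the Schlieder property, and the classical Roos theorem \cite{Roos} applies to them.

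Next, given $\f_1\in\cs_+(\ga_1)$ and $\f_2\in\cs_+(\ga_2)$, I would restrict them to the even parts and invoke Roos's theorem to obtain a (not necessarily even) state $\chi$ on $\ga$ with $\chi\lceil(\ga_1)_+=\f_1\lceil(\ga_1)_+$ and $\chi\lceil(\ga_2)_+=\f_2\lceil(\ga_2)_+$. To make the extension even, set $\bar\chi:=\tfrac12(\chi+\chi\circ\th)$; since $\th$ is an involutive $*$-automorphism, $\bar\chi$ is again a state, and it is even because $\bar\chi\circ\th=\bar\chi$. For $a=a_++a_-\in\ga_i$ one then computes $\bar\chi(a)=\tfrac12\big(\chi(a_++a_-)+\chi(a_+-a_-)\big)=\chi(a_+)=\f_i(a_+)=\f_i(a)$, the last equality using that the even state $\f_i$ annihilates the odd part $a_-$. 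Thus $\bar\chi$ is an even state on $\ga$ extending both $\f_1$ and $\f_2$, which is precisely $(C^*I)_{\bz_2}$.

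The only place where the grading genuinely intervenes is the reduction to the even parts: it is the anticommutation of odd elements that would otherwise destroy the commutation hypothesis underlying Roos's argument, and it is the evenness of $\f_1,\f_2$ that makes this reduction lossless. Accordingly, the step I expect to require the most care is the final computation, namely checking that the symmetrised $\bar\chi$ restricts to $\f_i$ on \emph{all} of $\ga_i$ rather than only on $(\ga_i)_+$; this is exactly where $\chi\lceil(\ga_i)_+=\f_i\lceil(\ga_i)_+$ must be combined with the evenness of $\f_i$. Should one wish to avoid citing Roos, the same reduction shows it suffices to prove that the marginal functional $a_1+a_2\mapsto\f_1(a_1)+\f_2(a_2)$ is positive on the operator system $(\ga_1)_++(\ga_2)_+$; this follows from $(S)$ by a continuous functional calculus estimate, using spectral bump elements $f(a_i)$ localised near $\min\spec(a_i)$, whose product is non-zero by $(S)$, after which a Hahn--Banach/Krein extension together with the same symmetrisation completes the argument.
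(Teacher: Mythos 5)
Your proof is correct and takes essentially the same route as the paper: restrict the even states to the elementwise-commuting even parts (which inherit $(S)$), apply the classical Roos theorem, and restore evenness — your symmetrisation $\bar\chi:=\tfrac12(\chi+\chi\circ\th)$ is literally composition with the conditional expectation $\eps_1=\tfrac12(\id_{\ga}+\th)$, which is exactly the paper's definition $\f:=\bar{\f}\circ\eps_1$. The only cosmetic difference is that the paper extends the Roos state from $\ga_{1,1}\vee\ga_{2,1}$ only to $\ga_+$ before composing with $\eps_1$, whereas you extend to all of $\ga$ first; your final verification that the even extension restricts to $\f_i$ on all of $\ga_i$ (via $\f_i(a_-)=0$ for even $\f_i$) matches the paper's observation that $\f\lceil\ga_-$ vanishes.
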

\begin{proof}
Let    $\f_1\in\cs_+(\ga_1)$ and $\f_2\in\cs_+(\ga_2)$ even states on $\ga_1$ and $\ga_2$ respectively. 
Denote by $\tilde{\f}_i:=\f_i\lceil\ga_{i,1}$, $i=1,2$ the restriction of $\f_i$ to the even subalgebra $\ga_{i,1}$. 
By assumption, the $(S)$ condition still holds for the even subalgebras $\ga_{1,1}$ and $\ga_{2,1}$. Therefore, since these two even subalgebras commute elementwise, by \cite[Theorem 1]{Roos}, there exists a state $\tilde{\f}\in\cs(\ga_{1,1}\vee\ga_{2,1})$ extending $\tilde{\f}_1$ and $\tilde{\f}_2$. Denote by $\bar{\f}$ its extension  to $\ga_+$ and define 
$$
\f:=\bar{\f} \circ \eps_1 \,.
$$
As $\f\lceil\ga_-$ is vanishing,  $\f$ is an even state on $\ga$ which extends both $\f_1$ and $\f_2$. 
\end{proof}

\section{$W^*$-Independence for $\bz_2$-graded von Neumann algebras}
\label{sec:W*independence}
In this section, we give several notions of independence for $\bz_2$-graded von Neumann algebras and prove some relationships among them. In particular, we adapt the definitions given in \cite{GoLuWi,Ham97} for von Neumann algebras with trivial grading to our setting. \\
Let $(\ca,\th)$ a $\bz_2$-graded von Neumann algebra of $\cb(\ch)$ and $(\ca_i,\th_i)$ von Neumann subalgebras of $(\ca,\th)$, with $\th_i:=\th\lceil_{\ca_i}$, $i=1,2$. The following definition is the analogue of $(C^*I)_{\bz_2}$ for $C^*$-algebras. 
\begin{defn}
\label{def:W*I}
$\bz_2$-graded subalgebras $(\ca_1,\th_1)$ and $(\ca_2,\th_2)$ are said to be \emph{$W^*I$-independent}, $(W^*I)_{\bz_2}$, if given even normal states $\om_1\in\cs_+(\ca_1)$ and $\om_2\in\cs_{+}(\ca_2)$, there exists an even normal state $\om\in\cs_+(\ca)$ s.t. $\om\lceil\ca_1 =\om_1$ and $\om\lceil\ca_2=\om_2$.
\end{defn}
\begin{defn}
\label{def:SL}
The ordered pair $(\ca_1,\ca_2)$ of $\bz_2$-graded von Neumann subalgebras of the $\bz_2$-graded von Neumann algebra $(\ca,\th)$ is said to satisfy \emph{strict locality}, $(SL)_{\bz_2}$, if for any nonzero projection $p_1\in\ca_1$ and any normal even state $\om_2\in\cs_+(\ca_2)$, there is a normal even state $\om\in\cs_+(\ca)$ s.t. $ \om(p_1) =1  $ and $\om\lceil\ca_2=\om_2$.
\end{defn}
Given nonzero projections $p_1\in\ca_1$ and $p_2\in\ca_2$, $p_1\wedge p_2$ denotes the projection on the close subspace $\ran(p_1)\cap\ran(p_2)$. Recall that $\bz_2$-graded von Neumann subalgebras $(\ca_1,\th_1)$ and $(\ca_2,\th_2)$ of the von Neumann algebra $(\ca,\th)$ are said to be \emph{logically independent}, $(LI)$, if $p_1\wedge p_2\neq0$ for any nonzero projection $p_1\in\ca_1$ and $p_2\in\ca_2$. In addition, \cite[Proposition 3.7]{GoLuWi} states that logically independent von Neumann algebras satisfy the $(Cross)$ condition, \emph{i.e.} $\|xy\|=\|x\|\|y\|$, for all $x\in\ca_1$, $y\in\ca_2$. 
Let $(\ga,\th)$ a $\bz_2$-graded $C^*$-algebra with unit $\id$. We notice that, by \cite[Proposition 4.3.3.]{Ka1} applied to an even self-adjoint subspace of $\ga$ and \cite[Lemma 2.3]{CRZ}, for any $A\in\ga_+$ there exists an even state $\r\in\cs_+(\ga)$ such that $\r(|A|)=\|A_+\|$.
The next proposition gives us the analogue of \cite[Proposition 3]{FloSum} for $\bz_2$-graded $C^*$-algebras, and its proof is very close to that given for the trivial setting. Therefore, we mention only the main steps. In what follows, $\s(A)$ denotes the spectrum of $A$. 
\begin{prop}
\label{prop:3FloSu}
Let $(\ga_1,\th_1)$ and $(\ga_2,\th_2)$ $C^*$-subalgebras of a $\bz_2$-graded $C^*$-algebra $(\ga,\th)$. Then $(Cross)\Rightarrow (C^*I)_{\bz_2}$.
\end{prop}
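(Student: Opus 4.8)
The plan is to recast $(C^*I)_{\bz_2}$ as a separation problem and to feed the $(Cross)$ hypothesis into it through a spectral estimate, closely following the trivial-grading argument of \cite{FloSum}. Fix even states $\f_1\in\cs_+(\ga_1)$ and $\f_2\in\cs_+(\ga_2)$ and consider the restriction map $R:\cs_+(\ga)\to\cs_+(\ga_1)\times\cs_+(\ga_2)$, $R(\psi)=(\psi\lceil\ga_1,\psi\lceil\ga_2)$. Since $\cs_+(\ga)$ is weak-$*$ compact and convex and $R$ is affine and weak-$*$ continuous, $K:=R(\cs_+(\ga))$ is compact and convex, and a common even extension of $\f_1,\f_2$ exists precisely when $(\f_1,\f_2)\in K$. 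Because an even state annihilates the odd part, the functionals separating points of $\cs_+(\ga_i)$ are realized by even self-adjoint elements; hence, if $(\f_1,\f_2)\notin K$, Hahn--Banach produces even self-adjoint $A_1\in\ga_1$, $A_2\in\ga_2$ with
\[
\f_1(A_1)+\f_2(A_2)>\sup_{\psi\in\cs_+(\ga)}\psi(A_1+A_2).
\]
The element $A_1+A_2$ is even and self-adjoint, so the Kadison-type statement recalled above (even states attain $\|(\cdot)_+\|$) upgrades, via continuous functional calculus, to $\sup_{\psi\in\cs_+(\ga)}\psi(X)=\max\s(X)$ for every even self-adjoint $X$; applying it with $X=A_1+A_2$ turns the separation into $\f_1(A_1)+\f_2(A_2)>\max\s(A_1+A_2)=:\l$.

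The whole weight of the argument is then carried by the following spectral lemma, which is where $(Cross)$ enters: if $B\in\ga_1$ and $A_2\in\ga_2$ are even and self-adjoint with $A_2\le B$, then $\max\s(A_2)\le\min\s(B)$. Granting this, set $B:=\l\idd-A_1\in\ga_1$; from $A_1+A_2\le\l\idd$ we get $A_2\le B$, whence
\[
\f_2(A_2)\le\max\s(A_2)\le\min\s(B)\le\f_1(B)=\l-\f_1(A_1),
\]
so $\f_1(A_1)+\f_2(A_2)\le\l$, contradicting the separation inequality. Therefore $(\f_1,\f_2)\in K$, and the state realizing it is even by construction; this proves $(C^*I)_{\bz_2}$.

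It remains to prove the spectral lemma, and I expect this to be the main obstacle, precisely because $(Cross)$ is a multiplicative norm identity while the conclusion is an order statement, and the subalgebras need not commute. After shifting so that $\s(B),\s(A_2)\subset(0,\infty)$, suppose the gap fails and choose $\min\s(B)<\mu_1<\mu_2<\max\s(A_2)$. Continuous functional calculus yields even positive contractions $x=g(B)\in\ga_1$ and $y=f(A_2)\in\ga_2$ localized near the bottom of $\s(B)$ and the top of $\s(A_2)$, so that $xBx\le\mu_1 x^2$ and $yA_2y\ge\mu_2 y^2$; together with $A_2\le B$ this gives the operator inequality $xA_2x\le\mu_1 x^2$. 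By $(Cross)$, $\|xy\|=\|x\|\,\|y\|=1$, so in the faithful graded universal representation $\pi_\G^e$ there are unit vectors $\eta_n$ with $\|\pi_\G^e(xy)\eta_n\|\to1$. Writing $\zeta_n:=\pi_\G^e(y)\eta_n$, one checks $\|\zeta_n\|\to1$ and, crucially, $\|\pi_\G^e(x)\zeta_n-\zeta_n\|\to0$; this alignment is what lets the estimate go through \emph{without} commuting $A_2$ past $x$, the genuine difficulty in the non-commuting case. A squeeze then gives $\mu_2\le\liminf_n\langle\pi_\G^e(A_2)\zeta_n,\zeta_n\rangle\le\limsup_n\langle\pi_\G^e(A_2)\zeta_n,\zeta_n\rangle\le\mu_1$, contradicting $\mu_1<\mu_2$. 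The remaining verifications (the two functional-calculus inequalities, the identity $\r(P^2)=\r(P)$ in the Kadison step, and the norm limits) are routine and would only be sketched, exactly as in the trivial-grading case.
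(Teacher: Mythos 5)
Your argument is correct, but it is not the route the paper takes. The paper follows \cite[Proposition 3]{FloSum} essentially verbatim, adapted to even states: first it uses $(Cross)$ together with the Cauchy--Schwarz inequality to produce an even state $\f\in\cs_+(\ga)$ attaining $\f(a)=\|a\|$ and $\f(b)=\|b\|$ simultaneously for even self-adjoint $a\in\ga_1$, $b\in\ga_2$; then it runs two successive weak-$*$ compactness/Hahn--Banach arguments on the ``value sets'' $\cv$ and $\cv'$, showing first that every even state of $\ga_1$ arises as a restriction compatible with any prescribed value $\f(b)=\mu\in[\min\s(b),\max\s(b)]$, and then that every even state of $\ga_2$ co-extends with a fully prescribed restriction to $\ga_1$. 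You instead perform a single Hahn--Banach separation in $\cs_+(\ga_1)\times\cs_+(\ga_2)$ and reduce everything to the spectral gap lemma $A_2\le B\Rightarrow\max\s(A_2)\le\min\s(B)$, which you prove directly from $(Cross)$ via an almost-norming sequence ($\zeta_n=\pi_\G^e(y)\eta_n$, with the alignment $\|\pi_\G^e(x)\zeta_n-\zeta_n\|\to 0$ following from $x^2\le x\le\idd$ and $\|x\zeta_n\|\to 1$ --- I checked that this and the two functional-calculus inequalities go through, and the positivity shift is in fact unnecessary). Your approach is more of a one-shot argument: it isolates the use of $(Cross)$ in a single order-theoretic lemma of independent interest and avoids the iterated value-set construction, whereas the paper's route yields along the way the stronger intermediate fact that \emph{all} pairs of values in the product of spectral intervals are jointly realized by even states. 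The two key lemmas are essentially inter-derivable: the paper's norm-attaining state applied to shifted elements gives your gap lemma, and conversely. Two small points you should make explicit: the separating functional can be taken of the form $(\psi_1,\psi_2)\mapsto\psi_1(A_1)+\psi_2(A_2)$ with $A_i$ even self-adjoint because even states factor through $\eps_1$ and the standing hypothesis $\th_i(\ga_i)\subseteq\ga_i$ keeps $\eps_1(A_i)$ inside $\ga_i$; and the identity $\sup_{\psi\in\cs_+(\ga)}\psi(X)=\max\s(X)$ for even self-adjoint $X$ uses that even states correspond to states of $\ga_+$ via $\r\mapsto\r\circ\eps_1$ together with $\s_{\ga_+}(X)=\s_{\ga}(X)$ for the unital inclusion $\ga_+\subseteq\ga$ --- both routine, as you say, but they are where the graded structure actually enters.
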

\begin{proof}
Let $a=a^*\in\ga_{1,+}$ and $b=b^*\in\ga_{2,+}$. By the considerations stated above, the property of $C^*$-norm and the Cauchy–Schwarz inequality, following the proof of \cite[Proposition 3]{FloSum}, there exists an even state $\f\in\cs_+(\ga)$, s.t. 
$\f(a)=\|a\|$ and $\f(b)=\|b\|$.\\
Consider, for fixed $b=b^*\in\ga_{2,+}$ and $\m\in[\min\s(b),\max\s(b)]$, the set of states
\begin{equation*}
  \cv:=\{\psi\in\cs_+(\ga_1)|\psi=\f\lceil_{\ga_1}\,, \text{for}\, \f\in\cs_+(\ga)\,\text{with}\, \f(b)=\m\}\,.
\end{equation*}
We will show that $\cv=\cs_+(\ga_1)$. Let $\{\f_n\}_n$, $\f_n\in\cs_+(\ga)$ a net of even states whose restrictions to $\ga_1$ are Cauchy in the weak$*$ topology and $\f_n(b)=\mu$, for any $n$. Since $\cs_{+}(\ga)$ is a weak$*$-compact convex set, $\{\f_n\}_n$ admits a convergent subnet, the limit of which, when restricted to $\ga_1$, is an element of $\cv$. This implies that $\cv$ is weak$*$ closed and convex. Suppose now that there exists an even state $\xi\in\cs_+(\ga_1)$, $\xi\notin \cv$.
By Hahn–Banach theorem, there exists a self-adjoint element $a\in\ga_{1,+}$ s.t. 
\begin{equation}
 \label{eq:xineqpsi} 
\xi(a)\neq \psi(a)\,, \quad\text{for all $\psi\in\cv$.} 
\end{equation}
Moreover, necessarily, $\xi(a)\in[\min\s(a),\max \s(a)]$. Therefore, by the first step of the proof, there exists an even state $\f\in\cs_+(\ga)$ s.t. $\f(a)=\xi(a)$ and $\f(b)=\mu$. Defining $\tilde{\psi}:=\f\lceil_{\ga_1}$, one has $\tilde{\psi}\in\cv$ and $\tilde{\psi}(a)=\f(a)=\xi(a)$, but this contradicts \eqref{eq:xineqpsi}. Hence, $\cv=\cs_+(\ga_1)$.\\
Finally, for a fixed $\f\in\cs_+(\ga_1)$, define 
\begin{equation*}
\cv^{\prime}:=\{\psi\in\cs_+(\ga_2)|\psi=\phi\lceil_{\ga_2}\,, \text{for}\, \phi\in\cs_+(\ga)\,\text{with}\, \phi\lceil_{\ga_1}=\f\}\,.
\end{equation*}
As above, $\cv^{\prime}$ is weak$*$-closed and convex and, if $\cs_+(\ga_2)\neq \cv^{\prime}$, there must exists a self-adjoint element $b\in\ga_{2,+}$ and an even state $\psi_0\in\cs_+(\ga_2)$ s.t.
\begin{equation}
 \label{eq:psi0neqpsi}
\psi_0(b)\neq \psi(b)\,,\quad \text{for all $\psi\in\cv^{\prime}$.} 
\end{equation}
Since $\psi_0(b)\in[\min\s(b),\max\s(b)]$, by the second step of the proof, there exists an even state $\chi\in\cs_+(\ga)$ with $\chi(b)=\psi_0(b)$ s.t. $\f=\chi\lceil_{\ga_1}$.  Denoting by $\upsilon:=\chi\lceil_{\ga_2}$, one has that $\upsilon\in\cv^{\prime}$ and $\upsilon(b)=\chi(b)=\psi_0(b)$, in contradiction with \eqref{eq:psi0neqpsi}. Therefore, $\cs_+(\ga_2)= \cv^{\prime}$ and this implies that $\ga_1$ and $\ga_2$ satisfy $C^*$-independence.   
\end{proof}
\begin{rem}
\label{rem:CrossCom}
If $(\ga_1,\th_1)$ and $(\ga_2,\th_2)$ are $C^*$-subalgebras of a $\bz_2$-graded $C^*$-algebra $(\ga,\th)\subseteq(\cb(\ch),\g)$ satisfying $(C)_{\bz_2}$, Proposition \ref{prop:3FloSu} is trivially satisfied. Indeed, given non vanishing elements $a\in\ga_1$ and $b\in\ga_2$, by hypothesis, one has $\|a\eta_{\G}(b)\|=\|a\|\cdot\|\eta_{\G}(b)\|\neq0$, 
and then $a\eta_{\G}(b) \neq0$, \emph{i.e.} the $(S)_{\bz_2}$ condition is satisfied. Therefore $(C^*I)_{\bz_2}$ follows by Theorem \ref{thm:RoosZ2}.  
\end{rem}
We recall that Proposition \ref{prop:3FloSu} still holds if $(\ca_1,\th_1)$ and $(\ca_2,\th_2)$ are von Neumann subalgebras of a $\bz_2$-graded von Neumann algebra, since the notion of $C^*$-independence is also applicable to a pair of
subalgebras of a $W^*$-algebra (see \cite{FloSum}). \\
\begin{thm}
Let $(\ca,\th)$ a $\bz_2$-graded von Neumann algebra on a Hilbert space $\ch$. Let $(\ca_1,\th_1)$ and $(\ca_2,\th_2)$ $W^*$-subalgebras of $(\ca,\th)$, with $\th_i:=\th\lceil_{\ca_i}$, $i=1,2$. Then
\begin{equation*}
  (W^*I)_{\bz_2}\Rightarrow (SL)_{\bz_2} \Rightarrow (LI) \Rightarrow (C^*I)_{\bz_2}
\end{equation*}
\end{thm}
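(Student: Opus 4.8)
The plan is to prove the three implications in turn, transplanting to the graded framework the classical chain of \cite{GoLuWi, Ham97} and then closing the loop with the two results already available here. Throughout, the two recurring technical ingredients will be (i) the production of states that are simultaneously \emph{even} and \emph{normal} and take the value $1$ on a prescribed projection, and (ii) the support-projection characterization of normal states.

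For $(W^*I)_{\bz_2}\Rightarrow(SL)_{\bz_2}$, fix a nonzero projection $p_1\in\ca_1$ and an even normal state $\om_2\in\cs_+(\ca_2)$. First I would manufacture an even normal state $\om_1$ on $\ca_1$ with $\om_1(p_1)=1$: take any normal state $\psi$ with $\psi(p_1)=1$ (a vector state from a unit vector in $\ran(p_1)$) and symmetrize, $\om_1:=\frac12(\psi+\psi\circ\th_1)$. Then $\om_1$ is normal, and even since $\th_1^2=\id$; moreover $\om_1(p_1)=\frac12\big(\psi(p_1)+\psi(\th_1(p_1))\big)=1$ provided $\th_1(p_1)=p_1$, i.e. provided $p_1$ is even. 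Feeding the even normal pair $\om_1,\om_2$ into $(W^*I)_{\bz_2}$ produces an even normal $\om\in\cs_+(\ca)$ restricting to $\om_1$ on $\ca_1$ and to $\om_2$ on $\ca_2$; in particular $\om(p_1)=\om_1(p_1)=1$, which is exactly $(SL)_{\bz_2}$.

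For $(SL)_{\bz_2}\Rightarrow(LI)$, let $p_1\in\ca_1$ and $p_2\in\ca_2$ be nonzero (even) projections. By the symmetrization above, choose an even normal state $\om_2$ on $\ca_2$ with $\om_2(p_2)=1$. By $(SL)_{\bz_2}$ there is an even normal $\om\in\cs_+(\ca)$ with $\om(p_1)=1$ and $\om\lceil_{\ca_2}=\om_2$, whence $\om(p_2)=\om_2(p_2)=1$ as well. Here normality is indispensable: for a normal state $\om(e)=1$ holds if and only if $s(\om)\le e$, with $s(\om)$ the support projection. Thus $s(\om)\le p_1$ and $s(\om)\le p_2$, so $0\neq s(\om)\le p_1\wedge p_2$ and hence $p_1\wedge p_2\neq0$, i.e. $(LI)$. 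Finally, for $(LI)\Rightarrow(C^*I)_{\bz_2}$ I would chain two facts already at our disposal: $(LI)$ yields the cross property $(Cross)$, $\|xy\|=\|x\|\,\|y\|$, by \cite[Proposition 3.7]{GoLuWi} (the argument uses spectral projections of even self-adjoint $x,y$, which are even, so it carries over verbatim — and these are exactly the elements needed downstream), and then Proposition \ref{prop:3FloSu} gives $(Cross)\Rightarrow(C^*I)_{\bz_2}$.

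The main obstacle is reconciling evenness and normality with the saturation requirement $\om(p)=1$. The symmetrization $\frac12(\psi+\psi\circ\th_i)$ preserves both normality and evenness but attains the value $1$ on $p$ only when $p$ is even (equivalently, when $p\wedge\th(p)\neq0$); for a genuinely odd-mixed projection no even state can reach $1$, since an even state sees $p$ only through its even part $\frac12(p+\th(p))$. This forces the projections entering $(SL)_{\bz_2}$ and $(LI)$ to be even — precisely the projections whose spectral data is visible to even states and precisely what the passage $(LI)\Rightarrow(Cross)\Rightarrow(C^*I)_{\bz_2}$ requires. The remaining steps are then routine, resting on the weak$^*$/normal support-projection formalism and on the two cited propositions.
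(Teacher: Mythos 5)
Your proof is correct and follows the same route as the paper's: produce an even normal state taking the value $1$ on the given projection, feed the pair into $(W^*I)_{\bz_2}$; then for $(SL)_{\bz_2}\Rightarrow(LI)$ run the same support-projection argument $s(\om)\le p_1$, $s(\om)\le p_2$, hence $0\neq s(\om)\le p_1\wedge p_2$; and close $(LI)\Rightarrow(C^*I)_{\bz_2}$ by chaining \cite[Proposition 3.7]{GoLuWi} with Proposition \ref{prop:3FloSu}, exactly as the paper does. The only real divergence is the device for manufacturing the saturating state: the paper takes the vector state at a unit \emph{even} vector $\xi\in\ch_{+}\cap\ran(p_1)$ (even by \cite[Lemma 3.1]{CRZ}), silently assuming such a vector exists, whereas you symmetrize, $\om_1=\frac12(\psi+\psi\circ\th_1)$, and are then forced to assume $p_1$ even to get $\om_1(p_1)=1$. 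Your explicit discussion of this obstruction is more careful than the paper, whose definitions of $(SL)_{\bz_2}$ and $(LI)$ quantify over \emph{all} nonzero projections while its proof only handles those admitting a homogeneous vector in their range; and your final remark is also accurate, since even states vanish on odd parts, so the Hahn--Banach separation inside Proposition \ref{prop:3FloSu} only ever produces even self-adjoint elements, whose spectral projections are even — thus $(LI)$ for even projections indeed suffices downstream. One slip in your parenthetical: ``$p$ even'' is \emph{not} equivalent to ``$p\wedge\th(p)\neq0$''; the latter is strictly weaker, and it, not evenness, is the exact criterion for an even normal state attaining $1$ at $p$: if $\om$ is even and normal then $s(\om)$ is even, so $\om(p)=1$ forces $s(\om)\le p\wedge\th(p)\neq0$, and conversely $p\wedge\th(p)$ commutes with the grading unitary, so its range contains a homogeneous unit vector whose (even, by the computation in Proposition \ref{prop:cyclic}) vector state does the job — note that odd vectors work just as well as even ones, so with this choice of $\psi$ your construction covers the optimal class of projections rather than only the even ones.
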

\begin{proof}
$(W^*I)_{\bz_2}\Rightarrow (SL)_{\bz_2}$ Let $p_1$ a non zero projection $\ca_1$, $\xi\in\ch_+$ a unit even vector belonging to the range of $p_1$, ($p_1\xi=\xi$). The vector state determined by $\xi$, $\om_\xi$, determines a normal state on $\ca_1$. Therefore, $\om_1:=\om_{\xi}$ is a normal state on $\ca_1$ s.t. $\om_1(p_1)=1$. In addition, $\om_\xi$ is even by \cite[Lemma 3.1]{CRZ}.   Let $\om_2\in\cs_+(\ca_2)$ a normal even state on $\ca_2$. By $(W^*I)_{\bz_2}$, there exists a normal even state $\om\in\cs_+(\ca)$ which extends $\om_1$ and $\om_2$, with $\om(p_1)=\om_1(p_1)=1$.\\
$(SL)_{\bz_2} \Rightarrow (LI)$ Let $p_1\in\ca_1$ and $p_2\in\ca_2$ non zero projections. Let $\xi$ be a unit vector in the range of $p_2$ ($p_2\xi=\xi$). Denote by $\om_2:=\om_{\xi}$ the vector state determined by $\xi$. Then $\om_2$ is a normal even state on $\ca_2$, with $\om_2(p_2)=1$. By $(SL)_{\bz_2}$, there exists a normal even state $\om\in\cs_+(\ca)$ s.t. $\om(p_1)=1$ and $\om\lceil_{\ca_2}=\om_2$. Therefore, $\om(p_1)=1=\om(p_2)$, implies that the support $s(\om)$ of the normal state $\om$ satisfies $s(\om)\leq p_1$, and $s(\om)\leq p_2$. As a consequence, $s(\om)\leq p_1\wedge p_2$, and then $p_1\wedge p_2\neq 0$.   
  \item[$(LI) \Rightarrow (C^*I)_{\bz_2}$] It follows by \cite[Proposition 3.7]{GoLuWi} and Proposition \ref{prop:3FloSu}.  
\end{proof}

\section{Characterisation of the graded nuclearity property}
\label{sec:nuclearity}
This section aims to give a characterization for the nuclearity of $\bz_2$-graded $C^*$-algebras. We define nuclearity for a $\bz_2$-graded $C^*$-algebra and the normal tensor product.\\
Recall that a $C^*$-algebra $\ga$ is  \emph{nuclear} if, for every $C^*$-algebra $\gb$ the algebraic
tensor product  $\ga\otimes\gb$ can be endowed with only one $C^*$-norm.\\
When a $\bz_2$-grading is added, an analogue property can be defined as follows:
\begin{defn}
\label{def:nuclear}
A $\bz_2$-graded $C^*$-algebra $\ga$ is said to be $\bz_2$-\emph{nuclear} if the maximal and minimal C*-cross norms on $\ga \hat{\otimes} \gb$ are the same for every $\bz_2$-graded $C^*$-algebra $\gb$.
\end{defn}
Consider a $\bz_2$-graded von Neumann algebra $(\mathcal{R},\g\lceil_{\car})$ and a $\bz_2$-graded $C^*$-algebra $\gb$. Following \cite{EL}, we introduce the set
$$
\mathrm{nor}=\mathrm{nor}(\mathcal{R} \otimes \gb):= \{ \varphi \in \cs (\mathcal{R} \hat{\otimes} \gb) \,\, \mathrm{s.t.} \, T_{\varphi}(\gb) \subseteq \mathcal{R_{*}} \}
$$
where $T_{\varphi} \in \mathcal{B}(\gb, \mathcal{R_{*}})$ is defined by $T_{\varphi}(b)(r):=\varphi(r \otimes b)$, for any $r \in \mathcal{R}$ and $b \in \gb$.\\
The set $\mathrm{nor}(\mathcal{R} \otimes \gb)$ separates the elements of $\mathcal{R}\hat{\otimes} \gb$ since 
it contains all vector states associated with any vector in $\ch \otimes \ck$, $\ch$ and $\ck$ being the Hilbert spaces on which $\mathcal{R}$ and $\gb$ act, respectively.
\begin{defn}
\label{def:normal}
For any $x \in \mathcal{R} \hat{\otimes} \gb$, the completion w.r.t. the norm 
$$
\|x\|_{\mathrm{nor}}= \sup \{\varphi(xx^{*})^{\frac{1} {2}}: \varphi \in \mathrm{nor}\}
$$
is the \emph{normal tensor product} $\mathcal{R} \otimes_{\mathrm{nor}} \gb$.
\end{defn}
Also recall that a norm on $\ga \hat{\otimes} \gb$ is said to be \emph{compatible} if the natural grading $\th_1 \hat{\otimes} \th_2$ extends to a $*$-automorphism of the completion (see \cite{CRZ} for further details).
\begin{lem}
\label{lem:nor}
$\| \cdot  \|_{\emph{nor}}$ is compatible.
\end{lem}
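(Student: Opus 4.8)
The plan is to show that the grading automorphism $\hat\th = \th_1\hat\otimes\th_2$ extends to a $*$-automorphism of the completion $\car\otimes_{\mathrm{nor}}\gb$, and the natural way to do this is to prove that $\|\cdot\|_{\mathrm{nor}}$ is invariant under $\hat\th$, i.e. $\|\hat\th(x)\|_{\mathrm{nor}} = \|x\|_{\mathrm{nor}}$ for every $x\in\car\hat\otimes\gb$. Since $\hat\th$ is an involutive $*$-automorphism of the $*$-algebra $\car\hat\otimes\gb$, norm invariance immediately guarantees that it extends by continuity to an isometric $*$-automorphism of the completion (its inverse being its continuous extension as well, because $\hat\th^2=\id$). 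So the whole lemma reduces to the invariance of the supremum defining $\|\cdot\|_{\mathrm{nor}}$.

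First I would unwind the definition: $\|\hat\th(x)\|_{\mathrm{nor}} = \sup\{\f(\hat\th(x)\hat\th(x)^*)^{1/2} : \f\in\mathrm{nor}\}$. Because $\hat\th$ is a $*$-homomorphism, $\hat\th(x)\hat\th(x)^* = \hat\th(xx^*)$, so the expression becomes $\sup\{(\f\circ\hat\th)(xx^*)^{1/2} : \f\in\mathrm{nor}\}$. The key observation is then that precomposition with $\hat\th$ permutes the set $\mathrm{nor}$, i.e. $\f\in\mathrm{nor}\iff \f\circ\hat\th\in\mathrm{nor}$; granting this, the two suprema range over exactly the same set of values and hence coincide. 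Since $\hat\th$ is involutive, it suffices to check the single implication $\f\in\mathrm{nor}\Rightarrow\f\circ\hat\th\in\mathrm{nor}$ and apply it to $\f\circ\hat\th$ to get the reverse.

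The crux is therefore to verify that $\mathrm{nor}$ is stable under $\f\mapsto\f\circ\hat\th$, which amounts to understanding how the operator $T_\f\in\cb(\gb,\car_*)$ transforms. Writing $\psi:=\f\circ\hat\th$, for $r\in\car$ and $b\in\gb$ one computes $T_\psi(b)(r) = \psi(r\otimes b) = \f(\hat\th(r\otimes b)) = \f(\th_1(r)\otimes\th_2(b))$, so that $T_\psi(b) = T_\f(\th_2(b))\circ(\g\lceil_{\car})$, where the grading of $\car$ is the restriction $\g\lceil_{\car}=\ad_{\G}$. The requirement $T_\f(\gb)\subseteq\car_*$ means $T_\f(b)$ is a normal functional for each $b$; I would argue that $T_\psi(b)$ is again normal because it is obtained from the normal functional $T_\f(\th_2(b))$ by precomposing with the weak$*$-continuous (normal) $*$-automorphism $\g\lceil_{\car}$ of $\car$. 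Indeed the predual $\car_*$ is invariant under precomposition by any normal automorphism, so $T_\psi(b)\in\car_*$ for all $b\in\gb$, giving $\psi\in\mathrm{nor}$.

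The main obstacle I anticipate is the bookkeeping in the identity $T_{\f\circ\hat\th}(b) = T_\f(\th_2(b))\circ(\g\lceil_{\car})$, together with the subtlety that the Fermi multiplication and involution carry sign factors $\eeps(i,j)$; one must be careful that $\hat\th$ really is a $*$-automorphism of the graded $*$-algebra (it is, by the definition of $\hat\th$ on elementary tensors and linearity) and that $\hat\th(r\otimes b)=\g(r)\otimes\th_2(b)$ holds on elementary tensors and extends linearly, so no sign anomalies intrude. Once the normality of $T_\psi(b)$ is secured from the normality of $\g\lceil_{\car}$ as a weak$*$-continuous map preserving $\car_*$, the permutation property of $\mathrm{nor}$ follows, the two defining suprema agree, and compatibility of $\|\cdot\|_{\mathrm{nor}}$ is established.
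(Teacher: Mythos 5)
Your proposal is correct and takes essentially the same route as the paper's own proof: both reduce compatibility to the isometry $\|\hat\th(x)\|_{\mathrm{nor}}=\|x\|_{\mathrm{nor}}$ and obtain it by showing that precomposition with $\hat\th$ permutes the set $\mathrm{nor}$, the decisive point in each case being that the grading on $\car$ (the adjoint action of $\G$) is ultraweakly continuous and hence preserves $\car_*$. Your explicit identity $T_{\f\circ\hat\th}(b)=T_\f(\th_2(b))\circ\th_1$, together with the remark that an isometric involutive $*$-automorphism extends to the completion, merely spells out steps the paper compresses into its final one-line justification.
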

\begin{proof}
Let $x$ be an element on $\mathcal{R} \hat{\otimes} \gb$. Then
\begin{align*}
\| \th(x)\|_{\mathrm{nor}}&=\sup \{ \psi(\th(x)^*\th(x))^{\frac{1}{2}}: \psi \in \mathrm{nor}(\mathcal{R}\hat{\otimes} \gb)\}\\
&=\sup \{ \psi(\th(x^*)\th(x))^{\frac{1}{2}}: \psi \in \mathrm{nor}(\mathcal{R}\hat{\otimes} \gb)\}\\
&=\sup \{\psi(\th(x^*x))^{\frac{1}{2}}: \psi \in \mathrm{nor} (\mathcal{R} \hat{\otimes} \gb)\}\\
&=\sup \{\psi' (x^*x)^{\frac{1}{2}}: \psi'\in \mathrm{nor} (\mathcal{R} \hat{\otimes} \gb)\}\\
&=\| x \|_{\mathrm{nor}}\,,
\end{align*}
since any ultraweakly continous functional $\psi'$ can be written as $\psi \circ \th$ (for the adjoint map $r \to \G r\G^*$, $\forall \,r \in \mathcal{R}$ is ultraweakly continous).
\end{proof}
The completion w.r.t. the $\emph{normal}$ norm turns out to be crucial in giving a characterization of graded nuclearity. Indeed:
\begin{lem}
\label{lem:vectorstate}
Let $\ga$ be a $\bz_2$-graded $C^*$-algebra. An even state on $\ga$ can be seen as a vector state on $\mathcal{R}_{e}$.
\end{lem}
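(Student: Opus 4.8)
The plan is to read the statement off the construction of the $\bz_2$-graded universal representation. Recall that $\pi_{\G}^{e}=\bigoplus_{\om\in\cs_+(\ga)}\pi_{\om}$ acts on $\ch_u=\bigoplus_{\om\in\cs_+(\ga)}\ch_{\om}$, and that $\mathcal{R}_e=\pi_{\G}^{e}(\ga)''$. The crucial observation is that a given even state $\f\in\cs_+(\ga)$ is itself one of the indices of this direct sum, so its GNS Hilbert space $\ch_\f$ sits isometrically inside $\ch_u$ as the $\f$-summand. First I would isolate the vector $\xi:=\bigoplus_{\r\in\cs_+(\ga)}\xi_{\r}$ with $\xi_{\r}=0$ for $\r\neq\f$ and $\xi_{\f}$ the cyclic GNS vector of $\f$. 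Since $\|\xi_\f\|=1$, the vector $\xi$ is a unit vector in $\ch_u$.

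Next I would compute the vector state it determines. For every $a\in\ga$, only the $\f$-component of $\xi$ contributes, and there $\pi_{\G}^{e}$ acts as $\pi_\f$, so
$$
\langle \pi_{\G}^{e}(a)\xi,\xi\rangle=\langle \pi_\f(a)\xi_\f,\xi_\f\rangle=\f(a).
$$
Writing $\om_\xi$ for the vector state on $\cb(\ch_u)$ defined by $\xi$, its restriction to $\mathcal{R}_e=\pi_{\G}^{e}(\ga)''$ is a normal state, and the displayed identity says precisely $\om_\xi\circ\pi_{\G}^{e}=\f$. Since $\pi_{\G}^{e}$ is faithful by \cite[Proposition 2.1]{CRZ}, I identify $\ga$ with $\pi_{\G}^{e}(\ga)\subseteq\mathcal{R}_e$; then $\om_\xi\lceil_{\mathcal{R}_e}$ is a vector state on $\mathcal{R}_e$ whose restriction to $\ga$ equals $\f$, which is exactly the assertion.

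Finally I would record the compatibility with the grading, which ensures that the realizing vector state is the appropriate even object. Evaluating the covariance relation $\G_{\th,\f}\pi_\f(a)\xi_\f=\pi_\f(\th(a))\xi_\f$ at $a=\idd$ yields $\G_{\th,\f}\xi_\f=\xi_\f$, so $\xi_\f$ is even; hence $\xi$ is an even vector for $\G_e=\bigoplus_{\om}\G_{\om}$, and the computation already carried out in Proposition \ref{prop:cyclic} shows that $\om_\xi$ is even. The only point requiring care is the bookkeeping of the direct-sum decomposition, namely checking that $\xi$ genuinely lies in the $\f$-summand and that $\G_e$ acts componentwise; the remainder is a direct computation, and I do not expect any substantive obstacle, since the statement is essentially built into the definition of $\mathcal{R}_e$.
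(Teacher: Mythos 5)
Your proposal is correct and follows essentially the same route as the paper's proof: both realize $\f\in\cs_+(\ga)$ as the vector state of the unit vector supported on the $\f$-summand of $\bigoplus_{\om\in\cs_+(\ga)}\ch_{\om}$ (the GNS cyclic vector $\xi_{\f}$ placed in its own component) and verify $\f(a)=\langle\pi_{\G}^{e}(a)\xi,\xi\rangle$ for all $a\in\ga$. Your extra checks --- evenness of $\xi_{\f}$ via $\G_{\th,\f}\xi_{\f}=\xi_{\f}$, normality of the restriction to $\mathcal{R}_e$, and the identification of $\ga$ with $\pi_{\G}^{e}(\ga)$ through faithfulness --- merely spell out details the paper leaves implicit by citing Proposition \ref{prop:cyclic}.
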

\begin{proof}
Let $\varphi \in \cs_+(\ga)$.  Then, in light of Proposition \ref{prop:cyclic} there exists an even unitary vector $x_{\varphi} \in \ch_{\G}=\bigoplus_{\varphi \in \cs_+(\ga)} \ch_{\varphi}$ defined as $x_{\varphi}(\varphi')=\xi_{\varphi} \delta_{\varphi, \varphi'}$, where $\xi_{\varphi} \in \ch_{\varphi},$\, such that $\forall a \in \ga$
\begin{align*}
\varphi(a)=\langle \pi_{\varphi}(a) \xi_{\varphi}, \xi_{\varphi}\rangle=\biggl \langle \bigoplus_{\varphi' \in \cs_{+}(\ga)}\pi_{\varphi'}(a) x_{\varphi},x_{\varphi} \biggl \rangle=
\langle \pi_{\G}^{e}(a)x_{\varphi},x_{\varphi}\rangle,
\end{align*}
with $\pi_{\varphi}$ grading-equivariant representation.
\end{proof}
\begin{thm}
\label{thm:noralg}
Let $\ga$ be a $\mathbb{Z}_2$-graded $C^*$-algebra. Then the following are equivalent:
\begin{itemize}
\item[i)] For any $\bz_2$-graded $C^*$-algebra $\gb$,
$$
\gb \hat{\otimes} _{\max} \ga=\gb \hat{\otimes}_{\min} \ga
$$
\item[ii)] For any $\bz_2$-graded von Neumann algebra $\mathcal{R}$,
$$
\mathcal{R} \hat{\otimes}_{\emph{nor}} \ga=\mathcal{R} \hat{\otimes}_{\min}\ga
$$
\end{itemize}
\end{thm}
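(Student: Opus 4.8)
The plan is to prove the graded counterpart of the Effros--Lance characterisation of nuclearity \cite{EL}, adapting every step to the Fermi tensor product and to grading-equivariant representations. The backbone is the sandwich of $C^*$-cross norms
$$
\|\cdot\|_{\min}\le\|\cdot\|_{\mathrm{nor}}\le\|\cdot\|_{\max}\qquad\text{on }\mathcal{R}\hat{\otimes}\ga,
$$
valid for every $\bz_2$-graded von Neumann algebra $\mathcal{R}$: the left inequality is minimality of $\|\cdot\|_{\min}$ among graded $C^*$-cross norms (\cite[Theorem 4.12]{CRZ}), while the right one holds because $\|\cdot\|_{\mathrm{nor}}$ is a compatible $C^*$-(semi)norm (Lemma \ref{lem:nor}) and $\|\cdot\|_{\max}$ dominates every such norm. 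Granting this, $i)\Rightarrow ii)$ is immediate: viewing $\mathcal{R}$ as a $\bz_2$-graded $C^*$-algebra, hypothesis $i)$ forces $\|\cdot\|_{\max}=\|\cdot\|_{\min}$ on $\mathcal{R}\hat{\otimes}\ga$, so the sandwich collapses and in particular $\|\cdot\|_{\mathrm{nor}}=\|\cdot\|_{\min}$.

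For the converse $ii)\Rightarrow i)$, fix a $\bz_2$-graded $C^*$-algebra $\gb$ and let $\mathcal{R}_e^{\gb}$ denote its $\bz_2$-graded enveloping von Neumann algebra from Section \ref{sec:Z2graded}, together with the faithful universal embedding $\gb\hookrightarrow\mathcal{R}_e^{\gb}$ (\cite[Proposition 2.1]{CRZ}). The technical core is the identity
$$
\|x\|_{\max,\,\gb\hat{\otimes}\ga}=\|x\|_{\mathrm{nor},\,\mathcal{R}_e^{\gb}\hat{\otimes}\ga}\qquad\text{for every }x\in\gb\hat{\otimes}\ga .
$$
I would prove the inequality $\ge$ first, as it is routine: restricting any $\varphi\in\mathrm{nor}(\mathcal{R}_e^{\gb}\hat{\otimes}\ga)$ along the embedding produces a state on $\gb\hat{\otimes}\ga$, hence $\varphi(x^*x)\le\|x\|_{\max}^2$, and taking the supremum gives $\|x\|_{\mathrm{nor}}\le\|x\|_{\max}$. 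The inequality $\le$ is the delicate one. Starting from a grading-equivariant representation $\Pi$ of $\gb\hat{\otimes}\ga$ on a graded Hilbert space $(\ch,\G)$, I restrict to the grading-equivariant representations $\pi_{\gb}$ and $\pi_{\ga}$ of the two factors; the Fermi multiplication forces the graded commutation $\pi_{\ga}(a)\pi_{\gb}(b)=\eeps(\partial a,\partial b)\,\pi_{\gb}(b)\pi_{\ga}(a)$ on homogeneous elements, equivalently $\pi_{\ga}(a)\pi_{\gb}(b)=\pi_{\gb}(\th(b))\pi_{\ga}(a)$ for odd $a$. By the universal property of $\mathcal{R}_e^{\gb}$ recalled above, $\pi_{\gb}$ extends to a \emph{normal} grading-equivariant representation $\tilde\pi_{\gb}$ of $\mathcal{R}_e^{\gb}$, and the relation becomes $\pi_{\ga}(a)\tilde\pi_{\gb}(B)=\tilde\pi_{\gb}(\th(B))\pi_{\ga}(a)$. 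Both sides are ultraweakly continuous in $B$ (using normality of $\tilde\pi_{\gb}$ and of the grading automorphism $\th$, as in Lemma \ref{lem:nor}) and agree on the ultraweakly dense $\pi_{\gb}(\gb)$, so by Kaplansky density the relation holds on all of $\mathcal{R}_e^{\gb}$. Thus $\Pi$ extends to a representation $\tilde\Pi$ of $\mathcal{R}_e^{\gb}\hat{\otimes}\ga$ that is normal on the first factor; each of its vector states $\varphi$ then satisfies $T_{\varphi}(a)(R)=\langle\tilde\pi_{\gb}(R)\tilde\pi_{\ga}(a)\xi,\xi\rangle\in(\mathcal{R}_e^{\gb})_{*}$, so $\varphi\in\mathrm{nor}$ and $\|\Pi(x)\|\le\|x\|_{\mathrm{nor}}$, giving $\le$.

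With the identity in hand I would conclude as follows. Applying hypothesis $ii)$ to $\mathcal{R}=\mathcal{R}_e^{\gb}$ yields $\|\cdot\|_{\mathrm{nor}}=\|\cdot\|_{\min}$ on $\mathcal{R}_e^{\gb}\hat{\otimes}\ga$. Since the minimal norm is injective---the faithful inclusion $\gb\hookrightarrow\mathcal{R}_e^{\gb}$ induces an isometric embedding $\gb\hat{\otimes}_{\min}\ga\hookrightarrow\mathcal{R}_e^{\gb}\hat{\otimes}_{\min}\ga$, because every even state on $\gb$ is realised as a (vector, hence normal) even state on $\mathcal{R}_e^{\gb}$ by Lemma \ref{lem:vectorstate} and $\|\cdot\|_{\min}$ is computed through GNS representations of products of even states---one has $\|x\|_{\min,\,\mathcal{R}_e^{\gb}}=\|x\|_{\min,\,\gb}$ on $\gb\hat{\otimes}\ga$. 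Chaining $\|x\|_{\max,\,\gb}=\|x\|_{\mathrm{nor},\,\mathcal{R}_e^{\gb}}=\|x\|_{\min,\,\mathcal{R}_e^{\gb}}=\|x\|_{\min,\,\gb}$ gives $\gb\hat{\otimes}_{\max}\ga=\gb\hat{\otimes}_{\min}\ga$, and since $\gb$ was arbitrary this is exactly $i)$.

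I expect the main obstacle to be the $\le$ half of the key identity, namely verifying that it is the grading-\emph{twisted} commutation, and not merely ordinary commutation, that survives the passage to $\mathcal{R}_e^{\gb}$. This is precisely where the normality of both $\tilde\pi_{\gb}$ and the grading automorphism $\th$ is indispensable: it is what allows the twisted relation $\pi_{\ga}(a)\tilde\pi_{\gb}(B)=\tilde\pi_{\gb}(\th(B))\pi_{\ga}(a)$ to be propagated by weak continuity from the dense copy of $\gb$ to the whole enveloping algebra, and hence guarantees that the vector states of $\tilde\Pi$ are genuinely of normal type and belong to $\mathrm{nor}$.
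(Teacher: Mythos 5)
Your proposal is correct in architecture and, for i) $\Rightarrow$ ii), literally coincides with the paper's proof (the sandwich $\|\cdot\|_{\min}\le\|\cdot\|_{\mathrm{nor}}\le\|\cdot\|_{\max}$ collapsing under the hypothesis). For ii) $\Rightarrow$ i) you and the paper use the same scaffolding --- the graded enveloping von Neumann algebra $\mathcal{R}_e$ of $\gb$ from Section \ref{sec:Z2graded}, the identification of the maximal structure on $\gb\hat{\otimes}\ga$ with the normal one on $\mathcal{R}_e\hat{\otimes}\ga$, Lemma \ref{lem:vectorstate}, and descent along $\gb\hookrightarrow\mathcal{R}_e$ --- but you run the middle step dually. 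The paper works at the level of states: it asserts $\cs_{+}^{\max}(\gb\hat{\otimes}\ga)=\mathrm{nor}(\mathcal{R}_e\hat{\otimes}\ga)$ via Lemma \ref{lem:vectorstate} and then approximates a nor-state weak-$*$ by weakly continuous states with finite-rank slice maps, concluding that every maximal state is minimally continuous. You instead prove the norm identity $\|x\|_{\max,\gb\hat{\otimes}\ga}=\|x\|_{\mathrm{nor},\mathcal{R}_e\hat{\otimes}\ga}$ by extending a grading-equivariant representation $\Pi$ of $\gb\hat{\otimes}\ga$ to one of $\mathcal{R}_e\hat{\otimes}\ga$ normal on the first leg, using the universal property of $\mathcal{R}_e$ and ultraweak propagation of the twisted relation $\pi_{\ga}(a)\tilde{\pi}_{\gb}(B)=\tilde{\pi}_{\gb}(\th(B))\pi_{\ga}(a)$. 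This is closer to the original lemma of \cite{EL}, and it buys something concrete: hypothesis ii) enters as a clean equality of norms, so you avoid the paper's weak-$*$ approximation net entirely --- whose last step, a weak-$*$ limit of $\|\cdot\|_{\min}$-continuous functionals being declared $\|\cdot\|_{\min}$-continuous, is the most fragile point of the published argument. Your closing remark about the grading-twisted commutation being the crux is well placed.

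Two points need to be made explicit to close your write-up. First, $\|\cdot\|_{\max}$ is a supremum over \emph{all} representations, while your extension procedure starts from a grading-equivariant $\Pi$; this is harmless but requires the one-line doubling trick: any representation $\pi$ is dominated by $\pi\oplus(\pi\circ\hat{\th})$, which is grading-equivariant for the flip unitary, so equivariant representations compute the maximal norm. (Relatedly, normality of the universal epimorphism $\rho$ is tacit in the paper, but holds because Proposition \ref{prop:cyclic} makes every grading-equivariant representation quasi-contained in a multiple of $\pi_{\G}^{e}$.) Second, your injectivity step is only half-justified by the route you indicate: Lemma \ref{lem:vectorstate} produces, for each $\omega_1\in\cs_+(\gb)$, a normal even extension to $\mathcal{R}_e$, which yields $\|x\|_{\min,\gb\hat{\otimes}\ga}\le\|x\|_{\min,\mathcal{R}_e\hat{\otimes}\ga}$ (as does minimality, \cite[Theorem 4.12]{CRZ}, applied to the restricted norm); but your chain $\|x\|_{\max,\gb}=\|x\|_{\mathrm{nor},\mathcal{R}_e}=\|x\|_{\min,\mathcal{R}_e}=\|x\|_{\min,\gb}$ needs the \emph{opposite} inequality $\|x\|_{\min,\mathcal{R}_e\hat{\otimes}\ga}\le\|x\|_{\min,\gb\hat{\otimes}\ga}$. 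That is genuine injectivity of the Fermi spatial norm: restricting an even state $\Omega_1$ of $\mathcal{R}_e$ to $\gb$ does not control $\|\pi_{\Omega_1\times\omega_2}(x)\|$ on the whole GNS space, only on the cyclic subspace generated by $\gb\hat{\otimes}\ga$. This must be imported from the spatial description of the minimal norm in \cite{CRZ} (e.g.\ via the Klein transformation). To be fair, the paper incurs exactly the same debt, silently, when it concludes that a state continuous for the minimal norm of $\mathcal{R}_e\hat{\otimes}\ga$ restricts to an element of $\cs_+^{\min}(\gb\hat{\otimes}\ga)$; so this is a point to flag and cite rather than a defect of your strategy relative to the paper's.
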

\begin{proof}
i) $\Rightarrow$ ii) It follows from the assumption that $\mathcal{R} \hat{\otimes}_{\max} \ga=\mathcal{R} \hat{\otimes}_{\min}\ga$
for any $\bz_2$-graded von Neumann algebra $\mathcal{R}$, \emph{i.e.} $\|\cdot\|_{\min}=\|\cdot\|_{\max}$. \\
By virtue of Theorem 4.12 in \cite{CRZ}, we get the inequality
$\| \cdot \|_{\min} \leq \| \cdot \|_{\mathrm{nor}}$. 
On the other hand, the inquality $\|\cdot\|_{\mathrm{nor}}\leq \|\cdot\|_{\max}$ holds as well.
Putting the two inequalities together, we find
$$ \| \cdot \|_{\min} \leq \| \cdot \|_{\mathrm{nor}} \leq \| \cdot \|_{\max} \leq \| \cdot \|_{\min}\,,
$$
which proves the equality stated in ii).\\
ii) $\Rightarrow$ i)
We will denote by $\cs_{+}^{\max}(\gb \hat{\otimes} \ga)$ and $\cs_{+}^{\min}(\gb \hat{\otimes} \ga)$ the set of those even states that extend to the completion w.r.t. the maximal and the minimal norm, respectively. Since by Lemma \ref{lem:vectorstate} any $\varphi$ in $\cs_{+}^{\max}(\gb \hat{\otimes} \ga)$ can be viewed as a vector state on $\mathcal{R}_e$,  the equality $\cs_{+}^{\max}(\gb \hat{\otimes} \ga)=\mathrm{nor}(\mathcal{R}_e \hat{\otimes} \ga)$ holds.
Accordingly, there exists a net of weakly continuous states $\{\varphi_{\nu}\}$ on $\mathcal{R}_e \hat{\otimes} \ga$ which converges $*$-weakly to the state $\varphi$.
Furthermore, every weakly continuous state $\varphi_{\nu}$ is a finite convex combination of vector states $\langle \, \cdot \, \xi, \xi \rangle$, where $\xi$ is an even unitary vector on $\ck \hat{\otimes} \ch$, $\ck$ and $\ch$ being the Hilbert spaces on which $\mathcal{R}_e$ and $\ga$ act respectively. Then, for every index $\nu$, the map $\phi_{\nu}$ defined through the linear isomorphism $\varphi_{\nu} \to \phi_{\nu}$ by $\phi_{\nu} (a)(r):=\varphi_{\nu}(r \otimes a)$, is a finite rank map from $\ga$ into ${\mathcal{R}_e}_*$ ($\varphi_{\nu}$ being $*$-weakly continuous w.r.t. their first variable). It is apparent that, for every $\nu$ and a given $a \in \ga$, the functional $\phi_{\nu}(a)=\varphi(\cdot \,\hat{\otimes} a)$ is continuous w.r.t. $\| \cdot \|_{\min}$.  Since $\phi_{\nu }(a) \xrightarrow{w*} \phi(a)$ we have that $\forall a \in \ga, \,\phi(a)$ is continuous too w.r.t. the minimal norm, \emph{i.e.} $\varphi$ is in $\cs_+^{\min}(\gb \hat{\otimes} \ga)$.
This ends the proof thanks to the equality $\cs_+^{\max}(\gb \hat{\otimes} \ga)=\cs_+^{\min}(\gb \hat{\otimes} \ga)$.
\end{proof}

\subsection*{Acknowledgments}
The first named author is supported by Italian PNRR MUR project PE0000023-NQSTI, CUP H93C22000670006 and by Progetto ERC SEEDS UNIBA ``$C^*$-algebras and von Neumann algebras in Quantum Probability", CUP H93C23000710001.

\end{document}